\documentclass[openright,a4paper,american,11pt]{amsart}
\usepackage[latin1]{inputenc}


\usepackage[dvips]{graphicx}
\usepackage{amsmath}
\usepackage{amssymb}
\usepackage{hyperref}
\usepackage{amsfonts}
\usepackage[usenames]{color} 
\usepackage{colortbl}
\usepackage{tikz}
\usetikzlibrary{decorations.markings}
\input xy
\xyoption{all}
\addtolength{\textwidth}{4cm}
\addtolength{\oddsidemargin}{-2cm}
\addtolength{\evensidemargin}{-2cm}

\def\tp{t^{\frac{1}{2}}}
\def\tm{t^{-\frac{1}{2}}}
\def\s{\sigma}
\def\td{(\tm - \tp)}
\def\R{\mathbb R}
\def\Z{\mathbb Z}
\def\ZZ{\mathbb Z[\tp,\tm]}
\def\Symm{\mathrm{Symm}}
\newcommand{\lin}[6]{
\draw[decoration={markings, mark=at position 0.5 with {\arrow{#6}}},postaction={decorate}] (#1,#2) .. controls (#1+#5,#2+#4) and (#3-#5,#2+#4)..(#3,#2);
}

\newcommand{\linn}[5]{
\draw(#1,#2) .. controls (#1+#5,#2+#4) and (#3-#5,#2+#4)..(#3,#2);
}

\newtheorem{theo}{Theorem}
\newtheorem{thm}{Theorem}[section]
         {\theoremstyle{definition}
\newtheorem{defi}[thm]{Definition}}
        \newtheorem{prop}[thm]{Proposition}
\newtheorem{lemma}[thm]{Lemma}
\newtheorem{cor}[thm]{Corollary}
\newtheorem{rem}[thm]{Remark}
         {\theoremstyle{definition}
\newtheorem{ex}[thm]{Example}}

\hypersetup{pdfstartpage=1, pdftitle="Alexander polynomial as an
  intersection of two cycles in a symmetric power", pdfauthor="Nikita
  Kalinin", pdfstartview={FitH}, pagebackref=true, pdfkeywords={Alexander polynomial, local system, symmetric power} }
\begin{document}
\keywords{braid group action, the Alexander polynomial.}

\title{The Alexander polynomial as an intersection of two cycles in a
  symmetric power}

\date{\today}

\author[N. Kalinin]{Nikita Kalinin}

\email{Nikita.Kalinin\{dog\}unige.ch, nikaanspb\{dog\}gmail.com}

\begin{abstract}
We consider a braid $\beta$ which acts on a punctured plane. 
Then we construct a local system on this plane and find a homology cycle $D$ 
in its symmetric power, such that $D\cdot \beta(D)$ coincides with 
the Alexander polynomial of the plait closure of $\beta$. 
\end{abstract}
\maketitle

\section{Introduction}

\subsection{Results obtained here}
We construct the Alexander polynomial for a knot $K\subset S^3$ presented as the plait closure of a colored braid. For that we use a local system on the plane imitating the action of $\mathbb Z[t, t^{-1}]$ on
the universal abelian covering of $S^3\setminus K$. Our construction is similar to a Floer homology construction: we intersect two
 manifolds in a symmetric power of a surface. We also obtain the presentation matrix of the Alexander module.
In the article \cite{Bigelow} (based on \cite{Lawrence}) the Jones polynomial of a knot is presented as
the intersection of two homology classes in a covering over a symmetric
power of the punctured disk; this article is based on
similar ideas. Lawrence's approach might lead to a construction of
higher Jones polynomials, which are currently unknown; that was one of the
main motivations for this work. Section \ref{sec_local} contains the definition of our local system, Section \ref{sec_braid} contains the preliminaries about colored braids, Section \ref{sec_defi} contains the definition of the invariant, statements of the theorems and their interpretation for the symmetric powers. The explicit formulae for the braid group action can be found in Section \ref{sec_action}, whereas Section \ref{sec_aux} consists of some auxiliary technical statements. Sections \ref{sec_proof},\ref{sec_skein} contain the proof and Section \ref{sec_rem} is devoted to remarks.

\subsection{Acknowledgements} This work appeared as my (master) graduation thesis in Saint Petersburg State University, 2010. 
I thank my adviser Oleg Yanovich Viro who gave me the problem discussed here. I thank
A. Akopyan, M. Karev, S. Podkorytov, L. Positselsky and participants of the topology seminar for discussions, suggestions and simplifications. Recently S. Bigelow told me
that the presented construction is not new and it is definitely known for experts. But, since it is hard
to find it written explicitly elsewhere, I translated the article into
English. Soon after I uploaded the draft to arxiv, Vincent Florens communicated to me that
the same construction of the Alexander polynomial is contained in
their paper \cite{vincent}(chapter 3), whose first version was written in
2008. Hence, my paper may serve as an example of concrete
computations. Research is supported in part by the grant 140666 of the Swiss National Science
Foundation as well as by the National Center of Competence in Research
SwissMAP of the Swiss National Science Foundation.

\subsection{Short history of the Alexander polynomial}

J. Alexander, using a diagram of a knot $K$, constructed the polynomial 
$\varDelta_K(t)$. This polynomial is invariant (modulo multiplications by 
$\pm t^k$) under Reidemeister moves, and, therefore, $\varDelta_K(t)$ depends only on the isotopy class
of $K$ (\cite{Alexander}). Alexander's construction works as follows:
to each connected component of the complement of the knot projection we associate
a variable, and to each crossing in the projection we associate an
equation in variables corresponding to the components touching this crossing. These equations are linear and
contain a formal parameter $t$. This gives us an $n\times (n+2)$ matrix $M$
consisting of the coefficients in all these equations; here $n$ is the number of crossings on the knot diagram. 
It happens that if we remove any two columns from $M$, then the
determinant $\varDelta_K(t)$ of the rest is an invariant (modulo multiplication by $\pm t^k$) with respect to the Reidemeister moves.
Afterward, this invariant was called the Alexander polynomial.
 
Each knot $K\subset S^3$ bounds a compact oriented surface $S$,
which is called a {\it Seifert surface} of $K$. The genus $g(K)$ of $K$ is, by definition, the minimal
possible genus for a Seibert surface spanning $K$. Then,
H. Seifert (\cite{Seifert}) found that $2g(K)$ is bounded from below
by the degree of
the Alexander polynomial of $K$, this required a new way to calculate the latter. Seifert considered the infinite
cyclic covering over $S^3\setminus K$; we obtain this covering if we cut
$S^3$ along a Seibert surface of $K$ and then glue together a countable number of such pieces along
their boundaries. The cohomology group $H^1$ of the obtained space is a module over
$\mathbb Z[t,t^{-1}]$, and the determinant of its representation matrix
is the Alexander polynomial.

An equivalent way to obtain this determinant is to take loops presenting a basis of the first homology group
of the Seibert surface, slightly push them from this surface to one side and then
compute linking numbers between the loops on the surface and
the moved loops. The resulting matrix $A$ of linking numbers is called {\it a Seifert matrix}.
Now we can compute the Alexander polynomial as $\varDelta_K(t) =
det(tA - A^{T})$ where $A^T$ is the transpose of $A$.

After the work of Alexander, R. Fox considered a presentation
of the knot group $\pi_1(S^3\setminus K)$, and introduced a non-commutative differential calculus (\cite{Fox61}),
which also allows us to compute $\varDelta_K(t)$. Detailed exposition
of this approach and information about higher Alexander polynomials (a.k.a. elementary ideals of the Alexander matrix) can be found
in the book \cite{CrowFox}.

J. Conway ruled out problems with sign in the polynomial and defined it axiomatically
via skein-relations \cite{Conway}.

There are a number of constructions of the Alexander polynomial via state sums derived
from physical models. The first such a construction appeared in the article \cite{Kauffman83}. 
A survey of this topic and other connections with physics can be found in \cite{Kauffman}.

In \cite{OzsvathSzabo}, \cite{Rasmussen} the Alexander polynomial is presented
as the Euler characteristic of a complex, whose homology are isotopy invariants of
the considered knot $K$, therefore Floer homology theory is a {\it
  categorification} of the Alexander polynomial. The reader can find more details in the survey \cite{Khovanov}.

\subsection{Some properties of the Alexander polynomial} The Alexander
polynomial has many beautiful properties. For example, if a knot $K$
is slice, i.e. being a knot in $S^3$ it bounds an embedded locally flat disk in
$D^4, \partial D^4=S^3$, then  
$\varDelta_K(t) = f(t)f(t^{-1})$ where
$f\in \Z[t]$(\cite{FoxMilnor}), furthermore, the degree of Alexander polynomial is no more than $2g(K)$ (\cite{Seifert}, see also \cite{Lickorish}).

Recently, an extraordinary connection between the Alexander polynomial 
and Seiberg-Witten invariants of the smooth four-dimensional manifolds has been found
(\cite{FintushelStern}, Seiberg-Witten invariants can recognize different smooth
structures on the same topological manifold).
It happens that if we cut out a neighborhood of a torus from a four-dimensional manifold $M$
and glue $S^1\times(S^3\setminus K)$ into the obtained free space, then, with some additional
assumptions, the obtained manifold $M_K$ will be homeomorphic to $M$,
and the Seiberg-Witten invariant will change by multiplication by $\varDelta_K(t)$.

\section{Local system of coefficients}\label{sec_local}

Let $D\subset \R^2$ be the unit disk on the plane, and let
$p_1,p_2,\ldots,p_{2n}\in D$ be a collection of {\it marked points} which lie
on the $x$-axis in the index increasing order: $-1<p_1<p_2<\ldots<p_{2n}<1$. Let us color points with odd indices
in black, and the points with even indices in white. Denote $D' = \R^2\setminus{\bigcup_{i=1}^n \{p_i\}}$, we shall call $D'$ a {\it punctured disk}, though $D'$ is only homotopically equivalent to it. 

In the following text we treat $t$ as a formal variable. We
consider an abelian local system $\Theta$ on $D'$ with the fiber $\mathbb
Z[\tp,\tm]$ : a small counterclockwise
rotation around a point $p_i$ corresponds to multiplication by $t^{(-1)^{i+1}}$
in the fiber. We choose the following trivialization of
  $\Theta$ near the $x$-axis:

\begin{defi}
Let $\Theta$ be trivial on the $x$-axis. Then, consider points
$(x,0),(y,0)$ such that $x<p_i<y$
and $[x,y]$ contains no marked points except $p_i$. Let the
path from $(x,0)$ to $(y,0)$ by a semicircle in the bottom
(resp. upper) half-plane gives multiplication by $t^{\frac{(-1)^{i+1}}{2}}$
(resp. by $t^{\frac{(-1)^i}{2}}$) in the fiber of $\Theta$. 
\end{defi}
The semicircles considered above are called {\it basic semicircles}. Each loop in $D'$ is homotopy equivalent to a sequence of basic semicircles.

So, the local system is fixed near the $x$-axis and we will not need its
concretization on the rest of the plane. Notice that an extension of
$\Theta$ from the $x$-axis to the plane is homotopically unique because the
bottom and the upper half-planes are simply-connected.

\begin{rem}
The expression ``total space of $\Theta$'' 
means the total space of the local system $\Theta$. The fibers of $\Theta$ are equipped with the discrete topology, therefore
the total space of $\Theta$ is a covering. 
\end{rem}

We consider two types of circles $s'_i,d'_i$ in $D'$. Namely,
 for $i=1,...,n$ the diameter
of $s'_i$ is an interval,
slightly bigger than the interval $[p_{2i-1},p_{2i}]$; therefore $s'_i$ contains no
marked points except $p_{2i-1},p_{2i}$. Similarly, for $i=1,...,n-1$  the diameter
of $d'_i$ is an interval slightly bigger than $[p_{2i},p_{2i+1}]$; therefore $d'_i$
contains no marked points except $p_{2i},p_{2i+1}$. We orient all these circles counterclockwise.

For each  $i=1,\dots, n$ we can lift $s'_i$ (resp. $d'_i$ for $i=1,\dots,n-1$) into the total space of $\Theta$ in
such a way that the points of these liftings over the $x$-axis have coordinate $1$ (resp. $\tp$) in the fiber of the local system. 
Denote these lifted circles by $s_i$ and $d_i$ correspondingly.

We shall add one more cycle $d_n$ to our collections $s_i(i=1,\dots,n),d_i(i=1,\dots,n-1)$. 
Let us take the semicircle with diameter 
$[p_1,p_{2n}]$ and take a loop $d_n'$ which is the boundary of a small
neighborhood of this semicircle, see Fig. \ref{my}. We define its
canonical lifting $d_n$ to the total space of $\Theta$ such that $d_n$ has the
coordinates $\tp$ in the fibers over the second
and third points of the intersection of $d_n'$ with the $x$-axis. 

\begin{figure}[h]
\begin{center}
\begin{tikzpicture}[scale=1]
\newcommand{\ba}{5};
\newcommand{\ra}{0.1};
\draw[dotted] (-4,0)--(3,0);
\draw[fill] (-3,0) circle (\ra) ;
\draw (-2,0) circle (\ra) ;
\draw[fill] (-1,0) circle (\ra) ;
\draw (0,0) circle (\ra) ;
\draw[fill] (1,0) circle (\ra) ;
\draw (2,0) circle (\ra) ;
\lin{-3.2}{0}{-1.8}{0.5}{0.3}{<}
\lin{-1.2}{0}{0.2}{0.5}{0.3}{<}
\lin{0.8}{0}{2.2}{0.5}{0.3}{<}

\lin{-3.2}{0}{-1.8}{-0.5}{0.3}{>}
\lin{-1.2}{0}{0.2}{-0.5}{0.3}{>}
\lin{0.8}{0}{2.2}{-0.5}{0.3}{>}

\lin{-3.5}{0}{-2.5}{-0.5}{0.3}{>}
\lin{1.5}{0}{2.5}{-0.5}{0.3}{>}
\lin{-2.5}{0}{1.5}{1.5}{0.3}{>}
\lin{-3.5}{0}{2.5}{2.5}{0.3}{<}
\draw (-0.5,1.5) node {$d_3$};
\draw (-2.5,-0.7) node {$s_1$};
\draw (-0.5,-0.7) node {$s_2$};
\draw (1.5,-0.7) node {$s_3$};
\draw (-2.5,0.8) node {$\tp$};
\draw (1.5,0.8) node {$\tp$};
\draw (-3.5,0.8) node {$1$};

\end{tikzpicture}

\caption{Added $d_n$, the numbers above the $x$-axis are the
  coordinates of the intersections of $d_n$ with the $x$-axis.}\label{my}
\end{center}
\end{figure}
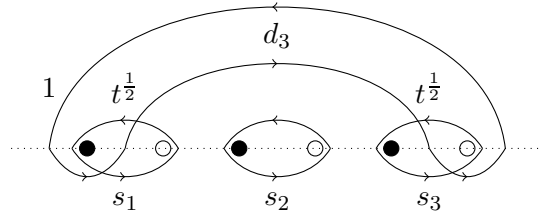

If $p_1,\dots, p_{2n}$ are arranged uniformly on a circle rather than
linearly, so that $p_{2n}$ is adjacent to $p_1$, then the definitions
of  all $d_i(i=1,...,n)$ will be alike. Also, we
abuse notation by writing $d_0=d_n, s_0=s_n, d_1=d_{n+1}$, etc.

We denote by $[x]$ the homology class of a chain $x$. However, we often omit brackets and write $x$ instead of $[x]$ where it simplifies the notation and can not lead to misunderstanding. 
\begin{prop}
The first homology group (with coefficients
in local system $\Theta$) $H_1(D';\Theta)$ of the punctured disk is freely
generated (as a module over $\mathbb
Z[\tp,\tm]$) by the set $\{[s_i]|i=1,...,n\}\cup\{
[d_i]|i=1,...,n-1\}$.
\end{prop}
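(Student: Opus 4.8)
The plan is to compute $H_1(D';\Theta)$ directly via a deformation retract of $D'$ onto a wedge of circles and then identify the resulting chain complex with a free module over $\ZZ$. Since $D'=\R^2\setminus\{p_1,\dots,p_{2n}\}$ is homotopy equivalent to a wedge of $2n$ circles, one small loop $\ell_k$ around each $p_k$ (based near the $x$-axis and closed up along basic semicircles), the total space of $\Theta$ admits a CW structure pulled back from this wedge. First I would write down the cellular chain complex with $\Theta$-coefficients: it has the form $0\to C_1\to C_0\to 0$ with $C_0=\ZZ$ (one vertex, the basepoint on the $x$-axis) and $C_1=\ZZ^{2n}$ (one generator per loop $\ell_k$), and the boundary map sends $\ell_k\mapsto (t^{(-1)^{k+1}}-1)\cdot(\text{basepoint})$ after accounting for the monodromy. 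Because $t^{\pm1}-1$ is not a zero-divisor in $\ZZ$, the map $C_1\to C_0$ is injective on the submodule spanned by any single $\ell_k$; more precisely $H_1(D';\Theta)=\ker\partial$ is the free submodule of $C_1=\bigoplus_k \ZZ\cdot \ell_k$ cut out by the single relation coming from $\partial$, and one checks it is free of rank $2n-1$.

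Next I would produce an explicit free basis of $\ker\partial$ adapted to the cycles $s_i,d_i$. The idea is to use the differences/products of consecutive loops: a loop encircling $p_{2i-1}$ and $p_{2i}$ together (which is homologous, after lifting with the prescribed coordinate $1$ over the $x$-axis, to $s_i$) and a loop encircling $p_{2i}$ and $p_{2i+1}$ together (homologous to $d_i$). Concretely, writing each $[s_i]$ and $[d_i]$ as an explicit $\ZZ$-combination of the $\ell_k$ using the trivialization near the $x$-axis fixed in the Definition, I would exhibit the $(2n-1)\times 2n$ matrix expressing $\{[s_i]\}_{i=1}^n\cup\{[d_i]\}_{i=1}^{n-1}$ in terms of $\{\ell_k\}$, and show that together with the single row coming from a transversal to $\ker\partial$ (e.g. $\ell_1$ itself, whose image generates the image of $\partial$ up to a unit in the localization) it forms an invertible $2n\times 2n$ matrix over $\ZZ$. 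Equivalently, and perhaps more cleanly: the classes $[s_i],[d_i]$ all lie in $\ker\partial$ (each such loop has trivial total monodromy by the alternating coloring), they are $2n-1$ in number, and I would show the quotient $\ker\partial/\langle [s_i],[d_i]\rangle$ vanishes by a direct surjectivity argument — every $\ell_k-\ell_{k+1}$-type combination in $\ker\partial$ is reached.

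The main obstacle I anticipate is the bookkeeping of the local-system coefficients: the monodromy factors $t^{(-1)^{k+1}}$ and especially the half-integer powers $t^{\pm 1/2}$ coming from the basic semicircles mean that the liftings $s_i,d_i$ are not literally the naive sums $\ell_{2i-1}+\ell_{2i}$ but carry $\tp,\tm$ weights, and getting every sign and exponent right in the change-of-basis matrix is the delicate part. A secondary subtlety is that $D'$ is noncompact and the disk $D$ has a boundary, so I should be slightly careful that I am using the homotopy type of the open punctured plane (equivalently the punctured disk), for which the wedge-of-circles model is valid, rather than anything involving $\partial D$. Once the matrix is written, checking it is invertible over $\ZZ$ — i.e. has determinant a unit $\pm t^{k}$ — is a routine triangularity computation, so I would relegate it to a short direct check rather than belabor it here.
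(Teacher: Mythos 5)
Your proposal is correct and follows essentially the same route as the paper: both pass to the wedge of $2n$ circles, work with the cellular chain complex with $\Theta$-coefficients (so $H_1=\ker\partial$ inside the free module on the $2n$ lifted loops), observe that $[s_i]$ and $[d_i]$ are staircase combinations of consecutive lifted $1$-cells with unit coefficients, and conclude by the triangular elimination you defer to a "routine check" (the paper carries it out by successively subtracting $s_1,d_1,s_2,\dots$ to kill the coefficients of $C_1,C_2,\dots$, with $C_{2n}$ forced to have coefficient $0$). The only difference is presentational: you phrase the last step as invertibility of a change-of-basis matrix, while the paper performs the sweep directly on an arbitrary cycle.
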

\begin{proof} The vanishing of $H_{k>1}(D';\Theta)$ follows from the fact that $D'$ 
is homotopically equivalent to a bouquet of $2n$ circles. We denote by
$C_1,C_2,\dots C_{2n}$ the $1$-cells of the 
bouquet, and by $p$ its $0$-cell. Then we orient all $C_i$ counterclockwise and lift them in the total
space of $\Theta$ such that all liftings $C_i'$ will start from a point $p$ with coordinate $1$ in the fiber over $p$.
In this construction we have $s_i=C_{2i-1}'+\tp\cdot C_{2i}'$ and $d_i= C_{2i}'+\tm\cdot C_{2i+1}'$.
Now we consider any exact $1$-chain $a=\sum a_iC_i$. Exactness means that $p\cdot(\sum a_{2i-1}(\tp-1)+\sum a_{2i}(\tm-1))=0$.
Now we subtract the exact chain $a_1s_1$ from $a$ and this
kills $C_1$ there. Then we subtract $d_1,s_2,d_2,s_3,\dots$ with some
coefficients, each time killing the coefficient before $C_i$ for the
next $i$.  A chain
$kC_{2n}$ is not exact for $k\ne 0$, therefore we proved that
all exact chains are generated by $s_i,d_i$.
\end{proof}

\begin{rem}
One can show that ${H_0(D';\Theta) = \ZZ/{(\tp-1)}} = \mathbb Z$ and $\sum_{i=1}^n [d_i] = \sum_{i=1}^n [s_i]$ in $H_1(D';\Theta)$.
\end{rem}

Let $pr\colon \Theta \to D'$ be the natural projection. We consider
a loop $l$ in the total space of the local system $\Theta$. It follows from the homotopy lifting property that,
given a point $x\in l$ in some fiber and $pr(l)$ in the plane, we can uniquely determine $l$.

{\bf How to understand figures.} 
It is convenient to depict (up to homotopy) a loop $l$ in the total space $\Theta$ using the following convention:
we take the projection of $l$ in $D'$ and deform it homotopically into {\it basic} semicircles, such that one of them contains the point $(-1,0)$. Therefore we already depicted $pr(l)$; its place in the total space of $\Theta$ 
 is encoded by coordinates of $l$ in the fibers over intersection of $pr(l)$ with the $x$-axis, see Fig.\ref{fig1}. Also, by homotopy lifting property, it is enough to know the coordinate of $l$ over the point $(-1,0)$, cf. Fig.~\ref{fig2}. Consider the semicircles from the leftmost point of each of $s_i',d_i'$ to the point $(-1,0)$. We can lift these semicircles altogether with $s_i',d_i'$. Note, that we chose liftings $s_i,d_i$ such that these lifted semicircles have the coordinate $1$ in the fiber over $(-1,0)$, see Fig.\ref{fig2}.

\section{The colored braid group and $H_1(D',\Theta)$}\label{sec_braid}

Take an arbitrary oriented link $L\subset \R^3$. Choose a direction ($z$-axis) and deform $L$ isotopically by pushing down all local
minima of $z$-coordinate and stretching up all local maxima of $z$-coordinate. This gives us a presentation of $L$ as the plait
closure (Fig.\ref{fig}) of a braid with $2n$ strings where $n$ is the number of local maxima. Twisting, if necessary, the neighbors of maxima and minima,  we may assume that the strings oriented upwards
connect the points with odd indices  and the string oriented downwards connect the points with even indices.  

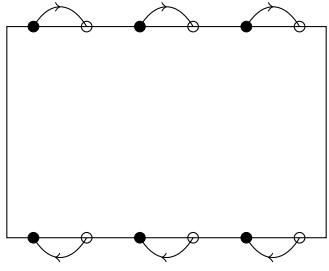
\begin{figure}[h]
\begin{center}
\begin{tikzpicture}[scale=0.7]
\newcommand{\ba}{4};

\lin{1}{\ba}{2}{0.5}{0.3}{>}
\lin{1}{0}{2}{-0.5}{0.3}{<}
\lin{3}{\ba}{4}{0.5}{0.3}{>}
\lin{3}{0}{4}{-0.5}{0.3}{<}
\lin{5}{\ba}{6}{0.5}{0.3}{>}
\lin{5}{0}{6}{-0.5}{0.3}{<}
\draw (0.5,0)--(6.5,0)--(6.5,\ba)--(0.5,\ba)--cycle;
\newcommand{\ra}{0.1};
\draw[fill] (1,\ba) circle (\ra) ;
\draw (2,\ba) circle (\ra) ;
\draw[fill] (3,\ba) circle (\ra) ;
\draw (4,\ba) circle (\ra) ;
\draw[fill] (5,\ba) circle (\ra) ;
\draw (6,\ba) circle (\ra) ;
\draw[fill] (1,0) circle (\ra) ;
\draw (2,0) circle (\ra) ;
\draw[fill] (3,0) circle (\ra) ;
\draw (4,0) circle (\ra) ;
\draw[fill] (5,0) circle (\ra) ;
\draw (6,0) circle (\ra) ;

\end{tikzpicture}
\caption{The plait closure of a braid colored in two colors}\label{fig}
\end{center}
\end{figure}
Let $B_{n,n}$ be the colored braid group with $2n$ strings, where
$n$ strings are colored in black (and we call these strings {\it
  odd}) and the other $n$ strings are colored in white (we
call these strings {\it even}). In this article we consider only braids with such a coloring.
Let
us read braids from bottom to up, odd strings are responsible for the
movement of odd (black) points $\{p_{2i+1}\}_{i=1}^n \subset D$ and even strings are responsible for the movement of even (white) points $\{p_{2i}\}_{i=1}^n\subset D$.  In multiplication, for example, $\beta\eta$, we add $\beta$ below $\eta$, see Fig.~\ref{fig:fig6} for an example of conventions we use.

\begin{figure}[hb]
\begin{center}
\begin{tikzpicture}[scale=1]

\newcommand{\ra}{0.1};
\draw[dotted] (-4,0)--(3,0);
\draw (0,0) circle (\ra) ;
\draw[fill] (-1,0) circle (\ra) ;
\draw (-2,0) circle (\ra) ;
\draw[fill] (-3,0) circle (\ra) ;
\draw (2,0) circle (\ra) ;
\draw[fill] (1,0) circle (\ra) ;
\draw (-3,0) node[below]{$p_1$};
\draw (-2,0) node[below]{$p_2$};
\draw (1,0) node[below]{$p_5$};
\draw (2,0) node[below]{$p_6$};
\draw (-2.5,1) node {$s_1$}; 
\draw (-0.5,1) node {$s_2$};
\draw[->] (-1.8,0) arc (0:90:0.7);
\draw[->] (-1.8,0) arc (0:180:0.7);
\draw[->] (-3.2,0) arc (180:360:0.7);
\lin{-0.5}{0}{0.5}{0.3}{0.3}{<}
\lin{-0.5}{0}{0.5}{-0.3}{0.3}{>}
\lin{-1.5}{0}{-0.5}{0.3}{0.3}{<}
\lin{-1.5}{0}{-0.5}{-0.3}{0.3}{>}

\draw (-1.5,-0.1) node[below]{$1$};
\draw (0.5,-0.1) node[below]{$1$};
\draw (-0.5,-0.1) node[below]{$a^{-1}$};
\draw (-0.5,0.1) node[above]{$a$};
\end{tikzpicture}

\caption{Filled points are black, empty points are white, semicircles
  are oriented counterclockwise,  $s_1$ is presented as in the
  definition, $s_2$ is presented via basic semicircles.
The number $a=\tm$ on top is the coordinate of $s_2$ in the fiber over the intersections of the top half of $s_2$ with the $x$-axis. The symmetric rule is applied for the bottom numbers. The ratio between the nontrivial bottom and 
top numbers is $\frac{a^{-1}}{a} = t$, as it should be for the end points
of a counter-clockwise oriented loop around a marked point with odd index.}
\label{fig1}
\end{center}
\end{figure}
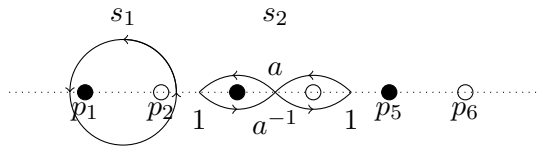

\subsection{The action of $B_{n,n}$ on the total space of $\Theta$}
\label{action}
An action of a group $G$ on a topological space $X$ is given by a
homomorphism $\phi\colon G \to Homeo(X)$ from $G$ to the automorphism group of the space $X$.

The group $B_{n,n}$ acts on $D$, mapping marked points to
marked points preserving colors. Namely, we have a homomorphism 
$\phi\colon B_{n,n} \to \pi_0(Homeo(D'))$ where we put by definition
$${Homeo(D') =
Homeo(D,\partial D, \bigcup_{i=1}^n\{p_{2i+1}\},\bigcup_{i=1}^n\{p_{2i}\})}$$ to be the group of autohomeomorphisms $f$
of $D$, such that $f|_{\partial D}=id$, and for each $i=1,\dots, n$ we have $f(p_i)=p_j$ where $i\equiv j \pmod 2$. 

Let $Homeo(D',\Theta)$ be the group of the autohomeomorphisms of the total space of $\Theta$, 
which preserve the fibers of $\Theta$ and are identical outside of $D$.
 
\begin{prop} The natural projection $Homeo(D',\Theta)\to Homeo(D')$ is an isomorphism.\qed
\end{prop}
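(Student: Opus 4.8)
The plan is to run covering space theory, with the only real geometric input being how a colored homeomorphism interacts with the monodromy of $\Theta$. Write $E$ for the total space of $\Theta$ and $pr\colon E\to D'$ for its projection, which is a (disconnected) covering. Its monodromy is the representation $\rho\colon \pi_1(D')\to \mathrm{Aut}(\ZZ)$ sending a small counterclockwise loop $x_i$ around $p_i$ to multiplication by $t^{(-1)^{i+1}}$. Since all these multiplications commute, $\rho$ factors as $\rho=(\text{mult.\ by }t)^{\,w(\cdot)}$ through the \emph{signed total winding number} homomorphism $w\colon \pi_1(D')\to \Z$, $w(x_i)=(-1)^{i+1}$. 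That the projection $Homeo(D',\Theta)\to Homeo(D')$ is a well-defined group homomorphism is routine: a fibre-preserving self-homeomorphism of $E$ induces a self-homeomorphism of $D'$ which, being the identity outside $D$, fixes $\partial D$ and extends over the punctures permuting them, and it permutes them preserving colors because it must intertwine $\rho$, whose value distinguishes black punctures from white ones.

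For injectivity, suppose $\tilde f\in Homeo(D',\Theta)$ projects to $\mathrm{id}_{D'}$; then $\tilde f$ is a deck transformation of the covering $pr$. Pick any $\tilde y\in E$ over $y\in D'$, join $y$ to a point $x$ in the nonempty connected region $\R^2\setminus D\subset D'$ by a path $\gamma$, and lift $\gamma$ to the path $\tilde\gamma$ in $E$ ending at $\tilde y$; its initial point $\tilde x$ lies over $x$, hence is fixed by $\tilde f$, as $\tilde f$ is the identity outside $D$. Then $\tilde f\circ\tilde\gamma$ is again a lift of $\gamma$ (a deck transformation covers the identity) and starts at $\tilde f(\tilde x)=\tilde x$, so by uniqueness of path lifting $\tilde f\circ\tilde\gamma=\tilde\gamma$, whence $\tilde f(\tilde y)=\tilde y$. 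As $\tilde y$ was arbitrary, $\tilde f=\mathrm{id}$.

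For surjectivity I would start from the crucial observation that every $g\in Homeo(D')$ satisfies $w\circ g_*=w$: being the identity on $\partial D$, $g$ is orientation preserving, and it permutes the $p_i$ within colors, so $g_*(x_i)$ is conjugate to $x_{\sigma(i)}$ with $\sigma(i)\equiv i\pmod 2$, and $w$, being a homomorphism, is constant on conjugacy classes; since the $x_i$ generate $\pi_1(D')$ this gives $w\circ g_*=w$, hence $\rho\circ g_*=\rho$. Now take the universal cover $q\colon \widetilde{D'}\to D'$, a basepoint $x_0\in\R^2\setminus D$ with a chosen lift $\tilde x_0$, and the unique lift $\tilde g\colon \widetilde{D'}\to\widetilde{D'}$ of $g$ with $\tilde g(\tilde x_0)=\tilde x_0$; it is $g_*$-equivariant for the deck action. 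Identifying $E=\widetilde{D'}\times_{\pi_1(D')}\ZZ$ via $\rho$, the relation $\rho\circ g_*=\rho$ is exactly what makes $(\tilde z,a)\mapsto(\tilde g\tilde z,a)$ descend to a map $\tilde f\colon E\to E$; it is a fibre-preserving homeomorphism covering $g$ (its inverse descends from $g^{-1}$), and it is the identity over $\R^2\setminus D$ because, after passing to the quotient, every point there is represented by some $(\tilde z,a)$ with $\tilde z$ in the sheet of $q^{-1}(\R^2\setminus D)$ through $\tilde x_0$, on which $\tilde g$ is the identity. I expect the main obstacle to be precisely this surjectivity step, and within it the two points that carry the content: checking $\rho\circ g_*=\rho$ (the only place the two-coloring is used) and upgrading ``$\tilde f$ covers the identity outside $D$'' to ``$\tilde f$ is the identity outside $D$'', which the sheet-representative remark handles.
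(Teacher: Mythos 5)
Your argument is correct. Note first that the paper offers no proof at all for this proposition --- it is stated with a \verb|\qed| as something evident --- so there is nothing to compare against; what you have written is a complete justification of a step the author leaves to the reader. The two points you isolate as carrying the content are indeed the right ones: injectivity is the standard deck-transformation/unique-path-lifting argument, and surjectivity reduces to the observation that a color-preserving, orientation-preserving homeomorphism fixes the monodromy representation $\rho$ (equivalently the signed winding homomorphism $w$), which is exactly where the two-coloring enters and why the fiber action $t^{(-1)^{i+1}}$ is compatible with the braid group action used throughout the paper. One small caution in the last step: the component of $q^{-1}(\R^2\setminus D)$ through $\tilde x_0$ is not a homeomorphic ``sheet'' over $\R^2\setminus D$, since $\R^2\setminus D$ is an annulus and injects on $\pi_1$ into $\pi_1(D')$ (the boundary loop maps to $x_1\cdots x_{2n}\neq 1$ in the free group), so that component is the universal cover of the annulus. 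This does not affect your conclusion: $\tilde g$ restricted to that component is a lift of the identity fixing $\tilde x_0$, hence the identity by uniqueness of lifts, and every point of $E$ over $\R^2\setminus D$ still admits a representative $(\tilde z,a)$ with $\tilde z$ in that component. With that wording tightened, the proof stands.
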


We have an isomorphism
$\pi_0(Homeo(D'))\to\pi_0(Homeo(D',\Theta))$.
Hence the homomorphism $B_{n,n} \to
\pi_0(Homeo(D',\Theta))$ gives us the action of $B_{n,n}$ on
$H_*(D';\Theta)$.

Now we show how we calculate this action.
Let $\beta\in B_{n,n}$ be a colored braid, $l$ be a loop in
the total space of $\Theta$. We know the action of $\beta$ on $D'$, therefore $pr(\beta l) =
\beta \cdot pr(l)$. To completely determine $\beta l$ we need only one point in $\beta l$. 
To do this we  {\it anchor} $l$, that means that 
in the homotopy class of $l$ we choose a loop which passes through $(-1,0)$. 
Now we know that $\beta ((-1,0))=(-1,0)$ therefore the coordinates of $l$ and
$\beta(l)$ in the fiber over $(-1,0)$ are the same (see
Fig.\ref{fig2}), then we use homotopy lifting property.

\begin{figure}[h]
\begin{center}
\begin{tikzpicture}[scale=1]
\newcommand{\ra}{0.1};
\draw[dotted] (-5,0)--(1,0);
\draw (0,0) circle (\ra) ;
\draw[fill] (-1,0) circle (\ra) ;
\draw (-2,0) circle (\ra) ;
\draw[fill] (-3,0) circle (\ra) ;
\draw (-4,0) node[below]{$(-1,0)$};
\draw (-4,0.1) node[above]{$1$};
\draw (-1.2,0.1) node[above]{$1$};
\draw (0.2,0.1) node[above]{$1$};
\draw (-3,0) node[below]{$p_1$};
\draw (-2,0) node[below]{$p_2$};
\draw (-1,0) node[below]{$p_3$};
\draw (0,0) node[below]{$p_4$};
\draw (-0.5,0.7) node {$s_2$}; 
\lin{-4}{0}{-1.2}{0.5}{0.5}{>}
\lin{-1.2}{0}{0.2}{0.5}{0.3}{<}
\lin{-1.2}{0}{0.2}{-0.5}{0.3}{>}

\draw (-2.6,0.6) node{$s'$};
\end{tikzpicture}

\caption{A homological cycle $[s_2]$ is presented as the loop parametrized by $s's_2s'^{-1}$, where $s'$
  is the path from the point $(-1,0)\in D'$ to a point on $s_2$. Numbers above $x$-axes are the
  coordinates of the loop in the fibers over the $x$-axes.}\label{fig2}
\end{center}
\end{figure}
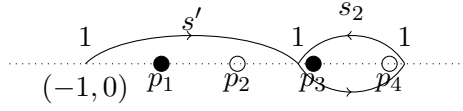

\subsection{Intersections with coefficients in the local system}
\label{intersection}

We can think of each element of $H_*(D',\Theta)$ as
a chain living in the total space of $\Theta$. Suppose that we have a cycle $a$ in $H_1(D',\Theta)$. For $i=1,\dots,n$ denote by $t^{\frac{k}{2}}r_i'$ the lifting of
the interval $r_i'=(p_{2i-1},p_{2i})$ in the total space of $\Theta$ such that the lifted points have coordinate $t^{\frac{k}{2}}$
in the fibers. Now define $a\cdot r_i' = \sum_{k\in \mathbb Z} <a
\cdot t^{\frac{k}{2}}r_i'>t^{\frac{k}{2}}$ where $<\cdot,\cdot>$ is
the usual intersection product of two homology cycles in a topological space.

\begin{defi} Denote by $r_i\in
H^1(D';\Theta^*), i=1,\dots,n$ the cohomological class, such that
$$r_i\times [s_j] = 0\ (j=1,\dots,n), r_i\times [d_{i-1}] = -1, r_i\times [d_i]
= 1, r_i\times [d_j] = 0\ (j\neq i-1,i).$$
\end{defi}
We denote by $\times$  the natural pairing between homology and
cohomology groups.

\begin{rem}
Using the duality between $H^1(D',\Theta^*)$ and $H_1(D',\bigcup\{p_i\};\Theta)$ we
see that the class $r_i$ is dual to the interval $r_i'=(p_{2i-1},p_{2i})$.
\end{rem}

\begin{prop} If $\beta s_i'$ is in a general position with respect to the $x$-axis, then $r_k\times
\beta [s_i]$ is the sum of fiber coordinates of $\beta s_i$ over points of set-theoretical
intersection of $\beta s_i'$ and $r'_i$.\qed
\end{prop}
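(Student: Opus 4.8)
The plan is to unwind the definition of the pairing $r_k \times \beta[s_i]$ through the duality statement of the preceding remark, and then identify the resulting count with a sum of fiber coordinates. First I would recall that, by the remark immediately above, $r_k$ is the cohomology class in $H^1(D';\Theta^*)$ dual to the relative cycle $[r_k'] \in H_1(D', \bigcup\{p_j\}; \Theta)$ represented by the lifted interval $r_k' = (p_{2k-1}, p_{2k})$. Under this duality, the pairing $r_k \times \beta[s_i]$ computes exactly the intersection product $\beta[s_i] \cdot r_k'$ in the sense defined in Section~\ref{intersection}, namely $\sum_{m \in \mathbb Z} \langle \beta s_i \cdot t^{m/2} r_k' \rangle t^{m/2}$, where $\langle\cdot,\cdot\rangle$ is the ordinary geometric intersection number in the total space of $\Theta$. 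The first step is therefore to justify this translation: that the algebraic pairing between $H^1(D';\Theta^*)$ and $H_1(D';\Theta)$, when $r_k$ is dual to $r_k'$, agrees with the $\mathbb Z[\tp,\tm]$-valued geometric intersection number of the cycle with the lifted interval. This is a standard Poincaré--Lefschetz duality argument for local systems on a surface with boundary, and I would verify it on the generators $[s_j], [d_j]$ using the defining equations of $r_k$, checking consistency with the listed values $r_k \times [d_{k-1}] = -1$, $r_k \times [d_k] = 1$, and all other pairings zero.

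Next, assuming $\beta s_i'$ is in general position with respect to the $x$-axis, the set-theoretic intersection $\beta s_i' \cap r_k'$ is a finite collection of points on the open interval $(p_{2k-1}, p_{2k})$ of the $x$-axis. For each such point $q$, the curve $\beta s_i'$ crosses the $x$-axis transversally, so locally $\beta s_i$ meets a unique lift $t^{m(q)/2} r_k'$ of the interval — precisely the lift whose fiber coordinate at $q$ equals the fiber coordinate of $\beta s_i$ at (the relevant branch over) $q$. The key computational point is that over the $x$-axis our chosen trivialization makes the fiber of $\Theta$ literally a copy of $\{t^{m/2}\}_{m \in \mathbb Z}$, so "which lift does $\beta s_i$ meet" and "what is the fiber coordinate of $\beta s_i$ at $q$" are the same datum. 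Summing $\langle \beta s_i \cdot t^{m/2} r_k' \rangle t^{m/2}$ over $m$ thus collapses to summing, over the points $q \in \beta s_i' \cap r_k'$, the fiber coordinate of $\beta s_i$ at $q$ (with the sign of the local intersection number $\pm 1$ absorbed — here one must check the orientation conventions so that all contributing signs are $+1$, which follows because $s_i$ was oriented counterclockwise and $r_k'$ inherits a fixed orientation along the $x$-axis, and $\beta$ preserves orientation).

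The main obstacle I anticipate is bookkeeping the orientation and sign conventions carefully enough that the final formula has no unexpected signs: one must confirm that the intersection sign of $\beta s_i'$ with $r_k'$ at each crossing is $+1$ (equivalently, that the stated pairings $r_k \times [d_{k-1}] = -1$ versus $r_k \times [d_k] = +1$ are consistent with this once the curves $d_{k-1}, d_k$ are drawn with their counterclockwise orientations and their prescribed liftings). A secondary technical point is making "general position with respect to the $x$-axis" precise — that $\beta s_i'$ meets the $x$-axis in finitely many points, none a marked point, and transversally — but this is generic and can be arranged within the homotopy class without changing either side of the asserted identity, since both the pairing and the fiber-coordinate sum are homotopy invariants of $\beta[s_i]$. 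Once these sign and transversality issues are settled, the statement follows immediately by combining the duality translation of the first step with the lift-identification of the second.
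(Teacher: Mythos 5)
The paper offers no proof of this proposition at all --- it is stated with a \qed{} and treated as immediate from the definition of $r_k$, the Remark identifying $r_k$ with the interval $r_k'$, and the intersection pairing of Section \ref{intersection} --- so your plan of unwinding that duality and matching the algebraic pairing with a geometric crossing count is exactly the intended reading, and the first two steps of your write-up are the right skeleton. However, your last step contains a genuine error: you claim that the orientation conventions can be checked so that \emph{all} local intersection signs of $\beta s_i'$ with $r_k'$ are $+1$, so that the signed count collapses to an honest unsigned sum of fiber coordinates. This cannot be correct. The fiber coordinates of $\beta s_i$ over the $x$-axis are monomials $t^{m/2}$ with coefficient $+1$, so if every crossing contributed with sign $+1$, the pairing $r_k\times\beta[s_i]$ would always be a Laurent polynomial with non-negative coefficients. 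The paper's own computations contradict this: for the Hopf link (Example \ref{ex2}, $\beta=P_1$) one gets $r_1\times\beta[s_1]=\tm-\tp$, and for the trefoil (Example \ref{ex1}) one gets $t^{-3/2}-\tm+\tp$ before the $e$-normalization. Geometrically the point is that a closed curve must cross the arc $(p_{2k-1},p_{2k})$ in both directions whenever its winding numbers about the two endpoints coincide (the algebraic crossing number with an arc is the difference of those winding numbers); this is exactly what happens for $P_1 s_1'$, which meets $r_1'$ in two points, one counted $+$ and one counted $-$.

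So the statement you should actually be proving is that each point $q\in\beta s_i'\cap r_k'$ contributes its fiber coordinate multiplied by the local intersection index $\pm1$ of the transversal crossing; the proposition's phrase ``sum of fiber coordinates over points of set-theoretical intersection'' has to be read as this signed sum, and your argument should determine which crossing direction carries which sign (this is fixed by checking $r_k\times[d_{k-1}]=-1$ versus $r_k\times[d_k]=+1$ on the two $x$-axis crossings of $d_{k-1}'$ and $d_k'$). A secondary point to be careful about in your duality step: the prescribed lift $d_k$ has fiber coordinate $\tp$, not $1$, over its unique crossing with $r_k'$, while $r_k\times[d_k]=1$ by definition; so the identification of $r_k$ with ``the'' lifted interval involves a choice of lift (an overall monomial factor) that your verification on generators must pin down rather than assume. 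Neither issue destroys the approach, but as written the sign claim is false and the conclusion does not follow without the corrections above.
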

%
%
%

\section{Definition of the invariant via external power}\label{sec_defi}

For $\beta\in B_{n,n}$ let $\beta_1\in B_n$ be the 
braid consisting of the odd strings of  $\beta$, and
let $\beta_2\in B_n$ consist of the even strings of $\beta$. Consider the group homomorphism 
$\phi\colon B_n\to \mathbb Z$, which sends all standard generators $\s_i$ to $1$. 

\begin{defi}
\label{def_e} Let $e(\beta) =
(\tp)^{\phi(\beta_1)}\cdot(-\tm)^{\phi(\beta_2)}\cdot (-1)^n.$
\end{defi}

Consider the $(n-1)$-th external power of cohomologies of the punctured
disc. For each $\beta\in B_{n,n}$ we define $U_\beta\in\ZZ$ by
the following formula: $$U_{\beta} = ((n-1)!)^2(r_1\wedge r_2\wedge \dots
\wedge r_{n-1})\times ([\beta s_1]\wedge \dots \wedge [\beta s_{n-1}])
=$$

$$=\det(r_i([\beta s_j)]) = \sum_{\sigma\in
S_{n-1}} \varepsilon(\sigma)\prod_{i=1}^{n-1}r_i\times [\beta
s_{\sigma(i)}].$$

Let $L$ be the plait closure (Fig.\ref{fig}) of a braid $\beta\in B_{n,n}$.
\begin{defi}
Define $U_L = e(\beta) \cdot U_{\beta}$.
\end{defi}

The main results of this paper are
\begin{theo}\label{th_main}
The polynomial $U_L$ is well-defined, i.e. it
does not depend on a presentation of $L$ as the plait closure of some braid
$\beta$.
\end{theo}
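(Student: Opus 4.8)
The plan is to show that $U_L$ is unchanged under the moves on plait presentations that generate the equivalence relation on braids having the same plait closure. By the Birman-type theory of plait (bridge) presentations, two braids $\beta,\beta'\in B_{n,n}$ (with the fixed two-color convention) give isotopic links if and only if they are related by a finite sequence of: (i) conjugation of $\beta$ by elements of the stabilizer subgroup $H\subset B_{n,n}$ consisting of braids that extend to the ``cap'' tangle at the top and the ``cup'' tangle at the bottom (equivalently, $\beta\mapsto h_1\beta h_2$ with $h_1,h_2\in H$ fixing the plait closure), and (ii) a stabilization move that increases $n$ by $1$ by adding a trivial cap--cup pair (a new nested max--min), together with its inverse. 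So the first step is to pin down explicitly the generators of $H$: these are the ``local'' braids near a cap or a cup, namely half-twists $\sigma_{2i}$ swapping $p_{2i}$ and $p_{2i+1}$ — wait, rather the braids that permute/twist adjacent cap-pairs $(p_{2i-1},p_{2i})$ and cup-pairs — and a careful bookkeeping of which $\sigma_j$'s these are.

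The heart of the argument is then a computation, for each generator $h$ of $H$, of the effect of $h$ (acting on the left or on the right) on the vector $([\beta s_1],\dots,[\beta s_{n-1}])$ and hence on the determinant $U_\beta=\det(r_i\times[\beta s_j])$, combined with the change in the prefactor $e(\beta)$. For right multiplication $\beta\mapsto\beta h$ with $h\in H$: the cycles $s_i$ are precisely the small loops around the cap-pairs, so each generator $h$ of the cap-stabilizer sends the ordered tuple $(s_1,\dots,s_{n-1})$ to another such tuple up to an invertible upper-triangular (or permutation-times-unit) change of basis over $\ZZ$, using the explicit braid action from Section~\ref{sec_action}; therefore $\det(r_i\times[\beta h s_j])$ changes by a unit $\pm t^{k/2}$, which is exactly compensated by the change in $e(\beta)$ coming from $\phi(\beta_1),\phi(\beta_2)$. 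For left multiplication $\beta\mapsto h\beta$ with $h$ in the cup-stabilizer: here one uses instead the dual description — $r_i$ is dual to the interval $r_i'=(p_{2i-1},p_{2i})$, and the $d_i$'s sit over the cup-pairs — so $h$ acts on the $r_i$ side by an analogous unimodular change, and again the determinant changes by a unit matching $e(\beta)$. The stabilization move (ii) is handled separately: adding a nested trivial max--min increases the matrix size by one, and one checks that the new row and column are $(0,\dots,0,1)$ up to a unit (the new $s_n$ meets only the new $r_n$), so the determinant is multiplied by a unit; simultaneously $n$ increases by $1$, $\phi(\beta_1),\phi(\beta_2)$ are unchanged (the added crossings cancel in pairs, or contribute a controlled amount), and the $(-1)^n$ and $((n-1)!)^2$ factors absorb the rest.

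To organize this cleanly I would first prove a lemma isolating the behaviour of the pairing $r_i\times[\beta\sigma_j^{\pm1}s_k]$ in terms of $r_i\times[\beta s_k]$ for the relevant boundary generators $\sigma_j$ — this is where the formulae of Section~\ref{sec_action} and the auxiliary statements of Section~\ref{sec_aux} get used — and then feed it into a determinant manipulation (row/column operations, cofactor expansion for the stabilization case). The main obstacle I anticipate is bookkeeping the half-integer powers of $t$: each boundary half-twist contributes a fiber coordinate like $\tp$ or $-\tm$ to various $\beta s_j$, so the determinant picks up a monomial factor, and one must verify that the definition of $e(\beta)$ — with its precise exponents $\phi(\beta_1)$, $\phi(\beta_2)$ and the signs $(-\tm)$, $(-1)^n$ — was engineered so that $e(\beta)\cdot U_\beta$ is genuinely invariant (not just invariant up to $\pm t^{k/2}$, but on the nose, since the theorem claims well-definedness as a polynomial). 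Getting these normalizations to match under both the left and right stabilizer moves and under bridge stabilization is the delicate part; everything else is linear algebra over $\ZZ$.
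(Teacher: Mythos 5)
Your overall strategy is the same as the paper's: invoke Birman's stable equivalence theorem for plait presentations, reduce to checking invariance of $e(\beta)U_\beta$ under (i) left/right multiplication by generators of the stabilizer subgroup and (ii) a stabilization move, and verify in each case that the unit factor picked up by the determinant is cancelled by the change in $e(\beta)$. That is exactly how the paper proceeds (Lemma on Birman's theorem, then Lemmas on the coset invariance and the Corollary on trivial components).

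However, there are two genuine gaps. First, you never commit to a generating set for the stabilizer, and this is not mere bookkeeping: Birman's subgroup $K_{2n}$ is generated by $\sigma_1$, $A=\sigma_2\sigma_1^2\sigma_2$ and $A_i=\sigma_{2i}\sigma_{2i-1}\sigma_{2i+1}\sigma_{2i}$, and these do \emph{not} preserve the two-coloring ($\sigma_1$ swaps a black and a white puncture), whereas your whole determinant computation lives in $B_{n,n}$. The paper must therefore prove a separate group-theoretic lemma identifying the color-preserving elements of $K_{2n}$ as the subgroup $R_n$ generated by an explicit finite list ($\sigma_{2i-1}^2$, $\sigma_1A\sigma_1$, $A$, $A_i$, $\sigma_{2i\mp1}A_i\sigma_{2i\pm1}$), and only then can a generator-by-generator check begin; your plan assumes without argument that the stabilizer is generated by colored ``local'' braids, which is precisely the step needing proof. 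Second, every verification is deferred to ``one checks,'' and at least one interim claim is off: for the paper's stabilization $st^*$ the move inserts $\sigma_{2n+1}\sigma_{2n}\sigma_{2n+1}$, so $\phi(\beta_2)$ increases by one (it is not unchanged) and $e$ changes by $(-\tm)(-1)=\tm$, which must cancel against the new diagonal entry $r_{n+1}\times[s_{n+1}]=\tp$. Moreover, the freedom to choose \emph{which} $n-1$ rows and columns of the $n\times n$ pairing matrix survive these moves rests on that matrix being pseudostochastic (zero row and column sums), so that all first minors agree up to sign --- a lemma your outline silently relies on but does not identify.
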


\begin{theo}\label{th_2}
The polynomial $U_L$ is the Alexander polynomial $\nabla_L$ of
the link $L$ in Conway normalization (we will prove this by skein
relations).
\end{theo}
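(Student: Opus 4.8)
The plan is to lean on Theorem~\ref{th_main}: since $U_L$ is already a well-defined invariant of the oriented link $L$ with values in $\ZZ$, it suffices to check that $U$ satisfies the two properties characterizing the Conway polynomial — the correct value on the unknot, and the skein relation $U_{L_+}-U_{L_-}=(\tp-\tm)\,U_{L_0}$ for every skein triple $(L_+,L_-,L_0)$ (up to the global sign fixed by the normalization factor $e$). Uniqueness of an invariant satisfying these is classical (the usual double induction on the number of crossings and on the unknotting number), so together with the existence of $\nabla$ this forces $U_L=\nabla_L$.

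First I would dispose of the base case. Presenting the unknot as the plait closure of the trivial braid in $B_{1,1}$ gives $n=1$, so the matrix $\big(r_i\times[\beta s_j]\big)$ is empty, $U_\beta=1$, and $U_L=e(\beta)$; inspecting Definition~\ref{def_e} evaluates this and matches it with the value of $\nabla$ on the unknot. I would also record that $U=0$ on the two-component unlink (the plait closure of the trivial braid in $B_{2,2}$), since there every $r_i$ annihilates every $[s_j]$ and the defining determinant has a zero entry; more generally $U$ vanishes on split links, which is needed for the induction.

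For the skein relation I would put a diagram of $L$ in Morse position, so that it becomes a plait closure and the distinguished crossing is a single band generator $\sigma_a^{\pm1}$ at some height of the braid word: $\beta=\gamma\,\sigma_a^{\pm1}\,\delta$ with $\beta\in B_{n,n}$, and then $L_+,L_-$ are the plait closures of $\gamma\sigma_a\delta,\gamma\sigma_a^{-1}\delta$, both still in $B_{n,n}$. The engine is the explicit $B_{n,n}$-action on $H_1(D';\Theta)$ from Section~\ref{sec_action}: the half-twist $\sigma_a$ acts on homology by a Burau-type automorphism $M_a$ which equals the identity except on a rank-one block attached to the two crossing strands and satisfies a quadratic relation, so that $M_a-M_a^{-1}$ is $(\tp-\tm)$ times a rank-one operator. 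Writing $[\beta s_j]=M_\beta[s_j]$ and $U_\beta=\det\big(r_i\times[\beta s_j]\big)$, the matrices belonging to $L_+$, $L_-$ and to the braid obtained by deleting $\sigma_a$ differ from each other by rank-one corrections, so the matrix determinant lemma turns $U_{L_+}-U_{L_-}$ into a single cofactor expression. When the two crossing strands carry the same colour (the coherent case), the power of $t$ that $e$ contributes — $\phi(\beta_1)$ or $\phi(\beta_2)$ shifts by $\pm1$ when $\sigma_a^{\pm1}$ is removed — precisely cancels the discrepancy between $M_a$ and $M_a^{-1}$, and what is left is exactly $(\tp-\tm)$ times the invariant $U$ of $\gamma\delta\in B_{n,n}$, which presents $L_0$.

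The main obstacle is the incoherent case, where $\sigma_a$ interchanges two strands of different colours. There the oriented smoothing creates a new local maximum and a new local minimum, so $L_0$ is the plait closure of a braid in $B_{n+1,n+1}$ and its invariant is an $n\times n$ determinant rather than an $(n-1)\times(n-1)$ one. Matching the two sides of the skein relation then demands a stabilization lemma comparing $U$ before and after insertion of this extra cap and cup; I expect to prove it from the description of $r_i$ as the class dual to the interval $(p_{2i-1},p_{2i})$ together with the relation $\sum_i[d_i]=\sum_i[s_i]$ in $H_1(D';\Theta)$, which should reduce the enlarged determinant to the expected one after the $e$-normalization. The interplay of the Burau relation, the jump in plait width and the signs hidden in $e$ is the delicate point; the coherent case and the base case are, by comparison, routine bookkeeping. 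Once the skein relation is established in both cases, the standard unknotting induction completes the identification $U_L=\nabla_L$.
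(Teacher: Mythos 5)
Your overall architecture (invariance from Theorem~\ref{th_main}, then the skein relation, then normalization and the standard uniqueness argument for Conway-type invariants) is the same as the paper's, and your treatment of the coherent case via the quadratic relation for the half-twist is in substance the paper's computation, which verifies the identity
\[
\bigl[\tm((1-t)s_i + \tp d_i)\bigr]\wedge \bigl[s_{i+1} + ts_i - \tp d_i\bigr] - \tp\,\bigl[\tm d_i\bigr] \wedge \bigl[s_i + s_{i+1} - \tm d_i\bigr]
= \td\, [s_i]\wedge [s_{i+1}]
\]
directly after cancelling the $e$-factors $e(\beta_\pm)=e(\beta)t^{\mp\frac12}$, $e(\beta_0)=e(\beta)$.

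The genuine gap is the incoherent case, which you correctly identify as the crux but leave as a conjectural ``stabilization lemma'' (``I expect to prove it from\dots''). That case never needs to arise, and the paper's proof turns on exactly this reduction: since $U_L$ is already known to be an invariant of the oriented link, you may first isotope $L_+$ so that the two strands at the distinguished crossing are both oriented \emph{upwards} and the crossing sits at the bottom of the plait presentation; the corresponding modifications then produce diagrams isotopic to $L_-$ and $L_0$. In the plait normal form of Section~\ref{sec_braid}, upward strands are precisely the odd (black) strands, so the two strands at the crossing automatically carry the same colour, the crossing is realized as $\s_{2i-1}\s_{2i}^{\pm1}\s_{2i-1}$, and its oriented smoothing is $\s_{2i-1}^2$ --- still a braid in $B_{n,n}$, with no new cap or cup and no jump in plait width. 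Without this reduction your proof is incomplete, because the rank-one/determinant-lemma mechanism you set up genuinely breaks when $L_0$ lands in $B_{n+1,n+1}$, and the comparison of an $n\times n$ with an $(n-1)\times(n-1)$ determinant that you would then need is not supplied. A secondary issue: your base case is asserted rather than checked. For the trivial braid in $B_{1,1}$ the $0\times 0$ determinant gives $U_\beta=1$ but Definition~\ref{def_e} gives $e(\beta)=(-1)^1=-1$, so the claimed match with $\nabla_{\mathrm{unknot}}=1$ does not follow by ``inspection''; the paper instead pins down the multiplicative normalization by the explicit trefoil computation of Example~\ref{ex1}, and you should either do the same or actually carry out and reconcile the unknot evaluation.
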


We recall that $s_i$ 
{\it morally} is a small circle
containing $[p_{2i-1},p_{2i}]$ and $r_i$ {\it morally} is
$(p_{2i-1},p_{2i})$, therefore we can interpret the above external
powers as manifolds in a symmetric space. 

\subsection{Reformulation in terms of $\Symm_{n-1}(D')$}

Let $\Symm_k(M)$ be the space of all unordered tuples of $k$ points
(points may coincide) in a topological space $M$.

$$\Symm_k(M) = M^k/S_k,\ \  M^k = \underbrace{M\times M\times\ldots\times M}_{\text{$k$
copies}}$$ where $S_k$ acts on the product by permutations.

We prove that a local system $\Theta$ on $D'$
with fiber $\ZZ$ canonically lifts to $\Symm_k(D')$. 
Indeed, such a local system on $D'$ is given, up to homotopy, by a homomorphism $Ab(\pi_1(D'))
\to \mathrm{Aut}(\ZZ)$ from the abelianization of the fundamental group of $D'$ to the group of the ring automorphisms of the fiber. The inclusion $D' \to (D',1,\dots,1)\subset D'^k$, induces the map $D'\to
\Symm_k{D'}$, and hence the homomorphism $Ab(\pi_1(D')) \to
Ab(\pi_1(\Symm_k(D')))$, which is an isomorphism in our case. Therefore
we have the canonical map $Ab(\pi_1(\Symm_k(D')))\to Ab(\pi_1(D'))
\to Aut(\ZZ)$. We denote by $\tilde\Theta$  the obtained local system on $\Symm_k(D')$.

Homology with coefficients in the local system
$\Theta$ of the punctured disc $D'$ are concentrated in dimensions $0$ and $1$, therefore
$$H_*(\Symm_{n-1}(D'); \tilde\Theta) = \bigwedge H_1(D'; \Theta).$$
The action of $\beta$ on $D'$ extends to the action on
$H_*(\Symm_{n-1}(D')$ as $\beta(x_1\wedge x_2\wedge\dots)=\beta
x_1\wedge \beta x_2\wedge\dots$.


Thus, we reformulate the definition of $V_\beta$ and construct two elements $$R' \in
H^{n-1}(\Symm(D');\tilde\Theta^*),R'=r_1\wedge r_2\wedge \dots
\wedge r_{n-1},$$ 
$$S\in
H_{n-1}(\Symm_{n-1}(D'); \tilde\Theta), S=[s_1]\wedge [s_2]\wedge \dots
\wedge [s_{n-1}],$$ and $V_{\beta}$ coincides with the value of $R'$ on $\beta S$.

Now replace $R'$, by duality, with a relative homology class $R\in H_{n-1}(\Symm_{n-1}(D');
\tilde\Theta)$, the closure of $R$ in $\Symm_{n-1}(D)$ is the symmetric product of the intervals 
 $[p_{2i-1},p_{2i}], i =1,...,n-1$. This presents $U_\beta$
as the intersection (with respect to $\tilde\Theta$) of these classes, i.e. $U_\beta=<R, \beta S>$.

Now we give a geometric definition of $U_\beta$:

\begin{defi} Consider two submanifolds in $\Symm_{n-1}(D')$: the
first, $S_{n-1}$, consists of  all sets of $n-1$ points in  $D'$, one
point in each circle $s_1,\ldots,s_{n-1}$, and the second submanifold
$R_{n-1}$ consists of all sets of $n-1$ points in $D'$, one point from
each interval
${(p_{1},p_{2}),\ldots,(p_{2n-3},p_{2n-2})}$.
\end{defi}

We recall how to define the intersection with respect to a local
system. The manifold $R_{n-1}$ is a cube, and we lift it to the total space such that all its points
have coordinate $1$ in fibers. We say that
$t^l S_{n-1}$ is the image of $S_{n-1}$ by the monodromy action which
corresponds to the multiplication by $t^{l}$. 
Let us intersect $R_{n-1}, t^l S_{n-1}$ as usual manifolds, this
intersection consists of a number of points, that is, an
integer number.

\begin{defi} Define the intersection product as $<R_{n-1},S_{n-1}> = \sum_{k\in \mathbb Z}
<R_{n-1},t^{\frac{k}{2}}S_{n-1}>t^{-\frac{k}{2}}$.
\end{defi}
Now, with this definition of multiplication we have
$U_L = e(\beta)\cdot<R_{n-1},\beta S_{n-1}>$ if $L$ is the
plait closure of $\beta$.

\section{Action of colored braid group generators}\label{sec_action}

We choose the following generators of the colored braid group $B_{n,n}$ considered as a subgroup of $B_{2n}$ in the natural way:
$$
\begin{cases}
 a)&M_i=\s_{2i-1} \s_{2i}\s_{2i-1} = \s_{2i}\s_{2i-1}\s_{2i}, i=1,\dots,n-1 \\
 b)&N_i=\s_{2i}\s_{2i+1} \s_{2i} = \s_{2i+1} \s_{2i} \s_{2i+1}, i=1,\dots,n-1\\
 c)&P_i=\s_{2i}^2, i=1,\dots,n-1 \\
 d)&Q_i=\s_{2i-1}^2, i=1,\dots,n \\
\end{cases}
$$
Indeed, $M_i$ switch two odd strings, $N_i$ switch two even strings
and $P_i$ tangle odd and even strings. For a minimal set of generators and relations on them, see
\cite{generate}(Theorem 3).

\label{action}
Now we are ready to find the action of these generators of $B_{n,n}$
 on the generators of $H_1(D';\Theta)$.

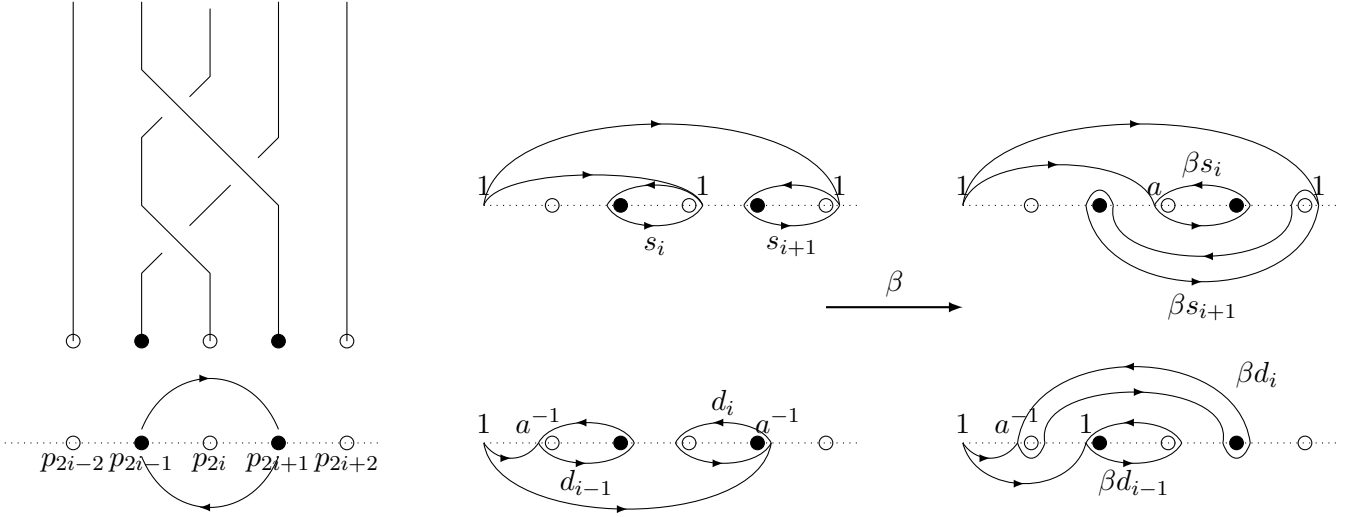
\begin{figure}[h]
\begin{center}
\begin{tikzpicture}[scale=0.9,>=latex]
\newcommand{\ra}{0.1};
\newcommand{\ba}{1.5};

\draw[dotted] (-5,0)--(0.5,0);
\draw (0,0) circle (\ra) ;
\draw[fill] (-1,0) circle (\ra) ;
\draw (-2,0) circle (\ra) ;
\draw[fill] (-3,0) circle (\ra) ;
\draw (-4,0) circle (\ra) ;

\draw (-4,0) node[below]{$p_{2i-2}$};
\draw (-3,0) node[below]{$p_{2i-1}$};
\draw (-2,0) node[below]{$p_{2i}$};
\draw (-1,0) node[below]{$p_{2i+1}$};
\draw (0,0) node[below]{$p_{2i+2}$};

\lin{-3}{0.2}{-1}{1}{0.4}{>}
\lin{-3}{-0.2}{-1}{-1}{0.4}{<}
\draw (0,\ba) circle (\ra) ;
\draw[fill] (-1,\ba) circle (\ra) ;
\draw (-2,\ba) circle (\ra) ;
\draw[fill] (-3,\ba) circle (\ra) ;
\draw (-4,\ba) circle (\ra) ;

\draw (-4,\ba)--++(0,5);
\draw (0,\ba)--++(0,5);
\draw (-1,\ba)--++(0,2)--++(-2,2)--++(0,1);
\draw (-2,\ba)--++(0,1)--++(-1,1)--++(0,1)--++(0.3,0.3);
\draw (-3,\ba)--++(0,1)--++(0.3,0.3);
\draw (-2.3,\ba+1.7)--++(0.6,0.6);
\draw (-1.3,\ba+2.7)--++(0.3,0.3)--++(0,2);
\draw (-2.3,\ba+3.6)--++(0.3,0.3)--++(0,1);
\renewcommand{\ba}{3.5}

\begin{scope}[xshift=1cm]
\draw[dotted] (1,\ba)--(6.5,\ba);
\draw (2,\ba) circle (\ra) ;
\draw[fill] (3,\ba) circle (\ra) ;
\draw (4,\ba) circle (\ra) ;
\draw[fill] (5,\ba) circle (\ra) ;
\draw (6,\ba) circle (\ra) ;
\draw (1,\ba) node[above] {$1$}; 
\draw (4.2,\ba) node[above] {$1$}; 
\draw (6.2,\ba) node[above] {$1$}; 

\lin{2.8}{\ba}{4.2}{0.4}{0.2}{<}
\lin{2.8}{\ba}{4.2}{-0.4}{0.2}{>}
\lin{4.8}{\ba}{6.2}{0.4}{0.2}{<}
\lin{4.8}{\ba}{6.2}{-0.4}{0.2}{>}
\lin{1}{\ba}{4.2}{0.6}{0.2}{>}
\lin{1}{\ba}{6.2}{1.6}{0.2}{>}
\draw (3.5,\ba-0.6) node{$s_i$};
\draw (5.5,\ba-0.6) node{$s_{i+1}$};
\draw[dotted] (8,\ba)--(13.5,\ba);
\draw (9,\ba) circle (\ra) ;
\draw[fill] (10,\ba) circle (\ra) ;
\draw (11,\ba) circle (\ra) ;
\draw[fill] (12,\ba) circle (\ra) ;
\draw (13,\ba) circle (\ra) ;
\draw (8,\ba) node[above] {$1$}; 
\draw (10.8,\ba) node[above] {$a$}; 
\draw (13.2,\ba) node[above] {$1$}; 

\lin{10.8}{\ba}{12.2}{0.4}{0.2}{<}
\lin{10.8}{\ba}{12.2}{-0.4}{0.2}{>}

\lin{12.8}{\ba}{10.2}{-1}{0.2}{>}
\linn{9.8}{\ba}{10.2}{0.3}{0.1}
\linn{12.8}{\ba}{13.2}{0.3}{0.1}
\lin{9.8}{\ba}{13.2}{-1.5}{0.2}{>}
\lin{8}{\ba}{10.8}{0.8}{0.2}{>}
\lin{8}{\ba}{13.2}{1.6}{0.2}{>}
\draw (11.5,\ba+0.6) node{$\beta s_i$};
\draw (11.5,\ba-1.5) node{$\beta s_{i+1}$};

\draw[->,thick] (6,2)--(8,2);
\draw (7,2) node[above]{$\beta$};

\renewcommand{\ba}{0}
\draw[dotted] (1,\ba)--(6.5,\ba);
\draw (2,\ba) circle (\ra) ;
\draw[fill] (3,\ba) circle (\ra) ;
\draw (4,\ba) circle (\ra) ;
\draw[fill] (5,\ba) circle (\ra) ;
\draw (6,\ba) circle (\ra) ;
\draw (1,\ba) node[above] {$1$}; 
\draw (1.8,\ba) node[above] {$a^{-1}$}; 
\draw (5.3,\ba) node[above] {$a^{-1}$}; 

\lin{1.8}{\ba}{3.2}{0.4}{0.2}{<}
\lin{1.8}{\ba}{3.2}{-0.4}{0.2}{>}
\lin{3.8}{\ba}{5.2}{0.4}{0.2}{<}
\lin{3.8}{\ba}{5.2}{-0.4}{0.2}{>}
\lin{1}{\ba}{1.8}{-0.3}{0.2}{>}
\lin{1}{\ba}{5.2}{-1.3}{0.2}{>}
\draw (2.5,\ba-0.6) node{$d_{i-1}$};
\draw (4.5,\ba+0.6) node{$d_{i}$};
\draw[dotted] (8,\ba)--(13.5,\ba);
\draw (9,\ba) circle (\ra) ;
\draw[fill] (10,\ba) circle (\ra) ;
\draw (11,\ba) circle (\ra) ;
\draw[fill] (12,\ba) circle (\ra) ;
\draw (13,\ba) circle (\ra) ;
\draw (8,\ba) node[above] {$1$}; 
\draw (9.8,\ba) node[above] {$1$}; 
\draw (8.8,\ba) node[above] {$a^{-1}$}; 
\lin{9.8}{\ba}{11.2}{0.4}{0.2}{<}
\lin{9.8}{\ba}{11.2}{-0.4}{0.2}{>}
\lin{11.8}{\ba}{9.2}{1}{0.2}{<}
\linn{8.8}{\ba}{9.2}{-0.3}{0.1}
\linn{11.8}{\ba}{12.2}{-0.3}{0.1}
\lin{8.8}{\ba}{12.2}{1.5}{0.2}{<}
\lin{8}{\ba}{9.8}{-0.8}{0.2}{>}
\lin{8}{\ba}{8.8}{-0.3}{0.2}{>}
\draw (10.5,\ba-0.6) node{$\beta d_{i-1}$};
\draw (12.3,\ba+1) node{$\beta d_{i}$};
\end{scope}

\end{tikzpicture}

\caption{The left picture illustrates the action of $M_i=\s_{2i-1}\s_{2i}\s_{2i-1},$ i.e., odd strings switch; $a=\tm$. We use the following convention: we read braids $\s_{i_1}\s_{i_2}\dots$ from left to right and apply them at the bottom, i.e. $\s_{i_1}$ being the lowest. When we act by a braid on the plane, time goes upwards on a braid.}\label{fig:fig6}
\end{center}
\end{figure}

a)$\beta=M_i$: see Fig.\ref{fig:fig6}. It is clear that $s_i\to \tm d_i$; breaking $\beta
s_{i+1}$ into basic semicircles we verify that $s_{i+1} \to s_i +
s_{i+1} - \tm d_i$. In the second row on the figure the action of $M_i$ on
$d_{i-1}, d_{i}$ is depicted.

$$
M_i=\s_{2i-1} \s_{2i}\s_{2i-1} \left\{
 \begin{array}{lcl}
   d_{i-1} \to d_{i-1} + d_i - \tm s_i \\
   s_i \to \tm d_i \\
   d_i \to \tm s_i \\
   s_{i+1} \to s_i + s_{i+1} - \tm d_i \\
  \end{array}
\right.
$$

b)$\beta=N_i$. The action of the following generator is derived from the previous one
by the substitutions $s_{i}\to d_{i}, d_{i-1}\to s_{i},
s_{i+1}\to d_{i+1}, d_i\to s_{i+1}, \tm \to \tp$.

$$
N_i=\s_{2i} \s_{2i+1} \s_{2i} \left\{
 \begin{array}{lcl}
   s_i \to s_i + s_{i+1} - \tp d_i \\
   d_{i} \to \tp s_{i+1} \\
   s_{i+1} \to \tp d_{i} \\
   d_{i+1} \to d_i + d_{i+1} - \tp s_{i+1} \\
 \end{array}
\right.
$$

c)$\beta=P_i$: see Fig.\ref{fig:mer7}
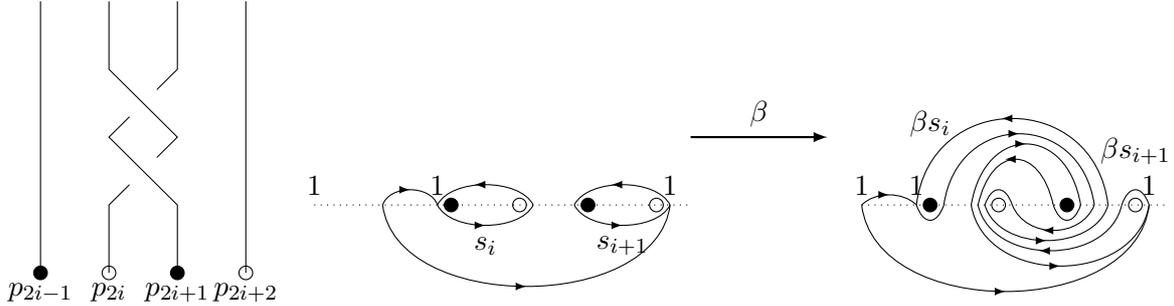
\begin{figure}[h]
\begin{center}

\begin{tikzpicture}[scale=0.9,>=latex]
\newcommand{\ra}{0.1};
\newcommand{\ba}{1.5};
\draw (-3,\ba) node[below]{$p_{2i-1}$};
\draw (-2,\ba) node[below]{$p_{2i}$};
\draw (-1,\ba) node[below]{$p_{2i+1}$};
\draw (0,\ba) node[below]{$p_{2i+2}$};
\draw (0,\ba) circle (\ra) ;
\draw[fill] (-1,\ba) circle (\ra) ;
\draw (-2,\ba) circle (\ra) ;
\draw[fill] (-3,\ba) circle (\ra) ;
\draw (0,\ba)--++(0,4);
\draw (-1,\ba)--++(0,1)--++(-1,1)--++(0.3,0.3);
\draw (-2,\ba)--++(0,1)--++(0.3,0.3);
\draw (-3,\ba)--++(0,4);

\draw (-1.3,\ba+1.7)--++(0.3,0.3)--++(-1,1)--++(0,1);
\draw (-1.3,\ba+2.7)--++(0.3,0.3)--++(0,1);

\renewcommand{\ba}{2.5}

\draw[dotted] (1,\ba)--(6.5,\ba);
\draw[fill] (3,\ba) circle (\ra) ;
\draw (4,\ba) circle (\ra) ;
\draw[fill] (5,\ba) circle (\ra) ;
\draw (6,\ba) circle (\ra) ;
\draw (1,\ba) node[above] {$1$}; 
\draw (2.8,\ba) node[above] {$1$}; 
\draw (6.2,\ba) node[above] {$1$}; 

\lin{2.8}{\ba}{4.2}{0.4}{0.2}{<}
\lin{2.8}{\ba}{4.2}{-0.4}{0.2}{>}
\lin{4.8}{\ba}{6.2}{0.4}{0.2}{<}
\lin{4.8}{\ba}{6.2}{-0.4}{0.2}{>}
\lin{2}{\ba}{2.8}{0.3}{0.1}{>}
\lin{2}{\ba}{6.2}{-1.6}{0.2}{>}
\draw (3.5,\ba-0.6) node{$s_i$};
\draw (5.5,\ba-0.6) node{$s_{i+1}$};
\draw[dotted] (9,\ba)--(13.5,\ba);
\draw[fill] (10,\ba) circle (\ra) ;
\draw (11,\ba) circle (\ra) ;
\draw[fill] (12,\ba) circle (\ra) ;
\draw (13,\ba) circle (\ra) ;
\draw (9,\ba) node[above] {$1$}; 
\draw (9.8,\ba) node[above] {$1$}; 
\draw (13.2,\ba) node[above] {$1$}; 

\linn{12.8}{\ba}{13.2}{0.3}{0.1}
\lin{10.7}{\ba}{12.8}{-0.9}{0.2}{<}
\lin{10.6}{\ba}{13.2}{-1.2}{0.2}{>}
\lin{10.7}{\ba}{11.8}{0.9}{0.2}{<}
\lin{10.6}{\ba}{12.2}{1.2}{0.2}{>}
\linn{11.8}{\ba}{12.2}{-0.3}{0.1}

\linn{10.8}{\ba}{11.2}{0.3}{0.1}
\lin{10.8}{\ba}{12.6}{-0.7}{0.2}{>}
\lin{11.2}{\ba}{12.4}{-0.5}{0.2}{<}
\lin{9.8}{\ba}{12.6}{1.7}{0.2}{<}
\lin{10.2}{\ba}{12.4}{1.4}{0.2}{>}
\linn{9.8}{\ba}{10.2}{-0.3}{0.1}
\lin{9}{\ba}{9.8}{0.2}{0.2}{>}
\lin{9}{\ba}{13.2}{-1.7}{0.2}{>}
\draw (10,\ba+1.2) node{$\beta s_i$};
\draw (13,\ba+0.8) node{$\beta s_{i+1}$};
\draw[->,thick] (6.5,3.5)--(8.5,3.5);
\draw (7.5,3.5) node[above]{$\beta$};
\end{tikzpicture}
\caption{$P_i=\s_{2i}^2,$ double tangling of two neighbor strings.}\label{fig:mer7}
\end{center}
\end{figure}
$$
P_i=\s_{2i}^2 \left \{
 \begin{array}{lcl}
   s_i \to s_i + \td d_i \\
   d_i \to d_i \\
   s_{i+1} \to s_{i+1} - \td d_i  \\
 \end{array}
\right.
$$

d)$\beta=Q_i$: The difference with c) is the same as the difference
between b) and a): we can apply the inverse of the substitution used in b).
$$
Q_i=\s_{2i-1}^2 \left\{
 \begin{array}{lcl}
   d_{i-1} \to d_{i-1} -\td s_i \\
   s_i \to s_i \\
   d_i \to d_i + \td s_i  \\
 \end{array}
\right.
$$

\begin{rem}
In fact, we described the explicit action of $M_i,N_i,P_i,Q_i\in B_{n,n}$ on the
$2n$-dimensional space, generated by the basis vectors labelled
$[d_i],[s_i],i=1,...,n$, this action clearly descends to $H_1(D',\Theta)$.
This is similar to the difference between the Burau
representation and the reduced Burau representation.
\end{rem}

Let us calculate $U_L$ for a number of examples, directly from our definition.
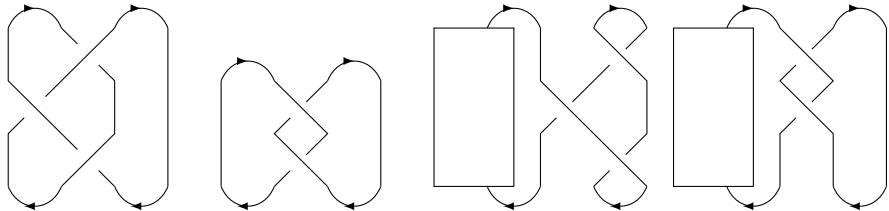
\begin{figure}[h]
\begin{center}
\begin{tikzpicture}[scale=0.7,>=latex]
\draw (0,0)--++(0,1)--++(0.3,0.3);
\lin{0}{0}{1}{-0.5}{0.2}{<}
\lin{2}{0}{3}{-0.5}{0.2}{<}

\draw (1,0)--++(1,1)--++(0,1)--++(-0.3,0.3);
\draw (1.3,0.7)--++(-0.3,0.3)--++(-1,1)--++(0,1);
\lin{0}{3}{1}{0.5}{0.2}{>}
\lin{2}{3}{3}{0.5}{0.2}{>}

\draw (0.7,1.7)--++(1.3,1.3);
\draw (2,0)--++(-0.3,0.3);
\draw (3,0)--(3,3);
\draw (1.3,2.7)--(1,3);

\begin{scope}[xshift=4cm]
\lin{0}{0}{1}{-0.5}{0.2}{<}
\lin{2}{0}{3}{-0.5}{0.2}{<}
\lin{0}{2}{1}{0.5}{0.2}{>}
\lin{2}{2}{3}{0.5}{0.2}{>}
\draw (0,0)--(0,2);
\draw (3,0)--(3,2);
\draw (1,0)--(1.3,0.3);
\draw(1.6,0.6)--(2,1)--(1,2);
\draw (2,0)--(1,1)--(1.3,1.3);
\draw (1.6,1.6)--(2,2);
\end{scope}

\begin{scope}[xshift=9cm]
\lin{0}{0}{1}{-0.5}{0.2}{<}
\lin{2}{0}{3}{-0.5}{0.2}{<}
\lin{0}{3}{1}{0.5}{0.2}{>}
\lin{2}{3}{3}{0.5}{0.2}{>}

\draw(0.5,0)--(-1,0)--(-1,3)--(0.5,3)--(0.5,0);
\draw (1,0)--(1,1)--(1.3,1.3);
\draw (1.6,1.6)--(2.3,2.3);
\draw (2.6,2.6)--(3,3);
\draw (2,0)--(2.3,0.3);
\draw (2.6,0.6)--(3,1)--(3,2)--(2,3);
\draw (3,0)--(1,2)--(1,3);

\end{scope}
\begin{scope}[xshift=13.5cm]
\lin{0}{0}{1}{-0.5}{0.2}{<}
\lin{2}{0}{3}{-0.5}{0.2}{<}
\lin{0}{3}{1}{0.5}{0.2}{>}
\lin{2}{3}{3}{0.5}{0.2}{>}
;
\draw (3,0)--(3,3);
\draw (1,0)--(1,1)--(1.3,1.3);
\draw(1.6,1.6)--(2,2)--(1,3);
\draw (2,0)--(2,1)--(1,2)--(1.3,2.3);
\draw (1.6,2.6)--(2,3);
\draw (0.5,0)--(-1,0)--(-1,3)--(0.5,3)--(0.5,0);
\end{scope}

\end{tikzpicture}

\caption{Left trefoil knot, Hopf link (Examples \ref{ex1},\ref{ex2}), adding a ``trivial component'', hanging a component (Examples \ref{examplest},\ref{ex4}).} \label{void}
\end{center}
\end{figure}

\begin{ex} \label{ex1}The left trefoil can be presented as the plait closure of the braid
$T = \s_2^{-1}\s_1\s_2^{-1} =
\s_{2}^{-2}\cdot\s_1\s_2\s_1\cdot\s_2^{-2} = P_1^{-1}M_1P_1^{-1}\in B_{2,2}$. Using Definition \ref{def_e} we see that $T_1=\s_1,T_2=1$ and $e(T)=\tp$.
\end{ex}
A direct calculation gives us 

$$
T \left \{
 \begin{array}{lcl}
   s_1 \xrightarrow{P_1^{-1}} s_1 - \td d_i \xrightarrow{M_1} \tm d_1 + (1-t^{-1})s_1 \xrightarrow{P_1^{-1}} \\\\
   \ \ \  \tm d_1 + (1-t^{-1})s_1 - (1-t^{-1})\td d_1\\
   \\\\
   s_2 \xrightarrow{P_1^{-1}} s_2 + \td d_i \xrightarrow{M_1} s_1 + s_2 - \tm d_1 - (1-t^{-1})s_1 \xrightarrow{P_1^{-1}} \\\\
   \ \ \   s_1 + s_2 - \tm d_1 - (1-t^{-1})s_1 + (1-t^{-1})\td d_1 \\
 \end{array}
\right.
$$

So, $e(T)\cdot r_1\times T([s_1]) = \tp \cdot r_1\times (\tm [d_1] +
(1-t^{-1})[s_1]+ \td(t^{-1}-1)[d_1]) = t + t^{-1} -1$, as it should be.

\begin{ex}\label{ex2} The Hopf link can be presented as the plait closure of the braid $H 
=P_1=\s_2^2\in B_{2,2}$. Then,  $H_1=H_2=1, e(\s_2^2) = 1$, $r_1\times H([s_1]) = r_1\times ([s_1] + \td
[d_1]) = \td$ and we get the Conway polynomial of the Hopf link.
\end{ex}

\section{Auxiliary facts}\label{sec_aux}
In this section we fix a particular $\beta\in B_{n,n}$. Let $B$ be
the matrix of the action of $\beta$ on $W$
with respect to the basis $\{[s_1], [d_1],[s_2],[d_2],\dots,[d_n]\}$ (in that order).
We consider the submatrix $B'$  which consists of all even rows and odd columns of
$B$. Note that $r_i\times \beta s_j$ is the entry of $B$ at the
intersection of $2i$-th row and $(2j-1)$-th column, and, therefore it is
the entry of $B'$ at the intersection of $i$-th row and $j$-th column. 

\begin{prop}$U_\beta$ is the determinant of the submatrix of $B'$, which
consists of first $n-1$ rows and first $n-1$ columns.\qed
\end{prop}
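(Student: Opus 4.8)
The statement to prove is essentially a bookkeeping identity: $U_\beta$, defined via the wedge-product pairing $((n-1)!)^2(r_1\wedge\dots\wedge r_{n-1})\times([\beta s_1]\wedge\dots\wedge[\beta s_{n-1}])$, equals the determinant of the top-left $(n-1)\times(n-1)$ block of the matrix $B'$. The key observation is already recorded in the paragraph preceding the proposition: the number $r_i\times[\beta s_j]$ is exactly the $(i,j)$-entry of $B'$ (the submatrix of $B$ obtained by keeping even rows and odd columns, because $[s_j]$ sits in odd column $2j-1$ and $r_i$ reads off even row $2i$ since $r_i$ pairs nontrivially only with the $d$-classes). So the plan is to unwind the definition of $U_\beta$ and match it term-by-term with the Leibniz expansion of that determinant.

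First I would recall from Section \ref{sec_defi} the explicit expansion
$$U_\beta=\det\bigl(r_i\times[\beta s_j]\bigr)_{1\le i,j\le n-1}=\sum_{\sigma\in S_{n-1}}\varepsilon(\sigma)\prod_{i=1}^{n-1}r_i\times[\beta s_{\sigma(i)}],$$
which is how $U_\beta$ was defined in the first place; the factor $((n-1)!)^2$ is precisely what converts the normalized wedge pairing into an honest determinant of the matrix of pairings. Then I would invoke the proposition at the end of Section \ref{intersection}, together with the remark that $r_i$ is dual to the interval $(p_{2i-1},p_{2i})$ and pairs to zero with every $[s_j]$ and with every $[d_j]$ for $j\neq i-1,i$: this pins down that $r_i\times[\beta s_j]$ is literally the entry of $B$ in row $2i$, column $2j-1$, which by the construction of $B'$ is the entry of $B'$ in row $i$, column $j$. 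Substituting this identification into the Leibniz sum immediately yields $U_\beta=\det\bigl(B'_{ij}\bigr)_{1\le i,j\le n-1}$, i.e. the determinant of the leading $(n-1)\times(n-1)$ submatrix of $B'$.

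There is essentially no hard step here — the content is entirely in the identification "$(i,j)$-entry of $B'$ $=r_i\times[\beta s_j]$", which has been established just above the statement. The only thing one must be slightly careful about is the indexing convention: one should check that $[\beta s_j]$, when written in the basis $\{[s_1],[d_1],\dots,[s_n],[d_n]\}$, contributes its coefficient of $[d_i]$ to the $(2i)$-th coordinate, and that $r_i$ extracts exactly that coefficient (with the correct sign), so that no stray signs or transpositions creep in; this follows from the defining relations $r_i\times[d_{i-1}]=-1$, $r_i\times[d_i]=1$. Once that is checked, the proposition is immediate, so I would present the argument as a two-line deduction rather than a computation.
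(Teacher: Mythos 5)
Your overall strategy is the same as the paper's: reduce the wedge pairing to the Leibniz determinant $\det\bigl(r_i\times[\beta s_j]\bigr)_{1\le i,j\le n-1}$ and identify that matrix with the top-left block of $B'$. The gap is in the last step, which you describe as a routine index check: you assert that $r_i$ ``extracts exactly the coefficient of $[d_i]$'' from $[\beta s_j]$ and that this follows from $r_i\times[d_{i-1}]=-1$, $r_i\times[d_i]=1$. Those very relations show the opposite: $r_i\times[\beta s_j]$ equals the coefficient of $[d_i]$ \emph{minus} the coefficient of $[d_{i-1}]$, so entry-wise it is \emph{not} the $(2i,2j-1)$ entry of $B$. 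Concretely, $M_1[s_1]=\tm[d_1]$, so the coefficient of $[d_2]$ in $M_1[s_1]$ is $0$, while $r_2\times M_1[s_1]=\tm\cdot(r_2\times[d_1])=-\tm$. An entry-wise identification is exactly the step that would fail; only the two determinants agree.

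What rescues the statement --- and what the paper's own proof paragraph is actually about --- is a triangularity/matching argument: among the classes $[d_1],\dots,[d_{n-1}]$ that index the first $n-1$ even rows, the pairing matrix $\bigl(r_i\times[d_j]\bigr)_{1\le i,j\le n-1}$ is lower unitriangular, because the extra class $d_{i-1}$ for $i=1$ is $d_0=d_n$, which lies outside this range. Hence $r_1$ can only be paired with $[d_1]$, then $r_2$ only with $[d_2]$, and so on; equivalently, the matrix $\bigl(r_i\times[\beta s_j]\bigr)$ is obtained from the coefficient submatrix of $B$ by unimodular row operations, so the $(n-1)\times(n-1)$ minors coincide even though individual entries do not. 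You should replace the final ``careful check'' by this unitriangularity observation; with it, the rest of your argument goes through as written.
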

The submatrix considered is that determined by the first $n-1$ even
rows and first $n-1$ odd columns in $B$. Then, in the definition of $U_\beta$ we see $|\{r_i\}|=n-1,
|\{[d_i]\}|=n-1$, therefore they must be paired. We have $r_1\times
[d_j]=0(1\leqslant j\leqslant n-1, j\ne 1)$, therefore $r_1$
corresponds to $[d_1]$ (i.e. the first even column in $B$). Hence $r_2$
corresponds to $[d_2]$, because $[d_1]$ is already in a pair, etc. Now it is easy to see
that the definition of $U_\beta$ is exactly the definition of the considered minor.

Note, that $B'$ is an $n\times n$ matrix.
\begin{theo}The determinant of each minor of $B'$ with $n-1$ rows 
and $n-1$ columns is $\pm U_\beta$.
\end{theo}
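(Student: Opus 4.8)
The plan is to exhibit a relation with $\pm 1$ coefficients among the $n$ rows of $B'$ and another among its $n$ columns, and then to invoke the elementary fact that an $n\times n$ matrix over a commutative ring all of whose rows sum to zero and all of whose columns sum to zero has all of its $(n-1)\times(n-1)$ minors equal up to sign; combined with the preceding Proposition, which identifies the minor obtained by deleting the last row and the last column of $B'$ with $U_\beta$, this gives the theorem. (For $n=1$ both sides are $\pm 1$, so assume $n\ge 2$.)

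For the row relation I would show that $r_1+\dots+r_n$ pairs trivially with every class of $H_1(D';\Theta)$. Indeed $r_i\times[s_j]=0$ for all $i,j$ by definition, and for $j=1,\dots,n-1$ the telescoping in the definition of the $r_i$ gives $\sum_{i=1}^n r_i\times[d_j]=r_j\times[d_j]+r_{j+1}\times[d_j]=1-1=0$; since $\{[s_1],\dots,[s_n],[d_1],\dots,[d_{n-1}]\}$ is a basis of $H_1(D';\Theta)$ by the first Proposition of Section~\ref{sec_local}, the assertion follows. Applying it to the cycles $\beta[s_j]$ gives $\sum_{i=1}^n r_i\times\beta[s_j]=0$ for every $j$, so the rows of $B'$ sum to the zero row.

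For the column relation I would use that $\sum_{j=1}^n[s_j]$ is fixed by the action of $B_{n,n}$ on $H_1(D';\Theta)$. This can be read directly from the formulas of Section~\ref{sec_action}: each of the generators $M_i,N_i,P_i,Q_i$ sends $[s_i]+[s_{i+1}]$ to itself and leaves the other $[s_k]$ fixed; conceptually, $\sum_j[s_j]=\sum_j[d_j]$ is the class of a loop encircling all of $p_1,\dots,p_{2n}$, which can be pushed into a collar of $\partial D$ on which the braid acts as the identity. Hence for each $i$
$$\sum_{j=1}^n r_i\times\beta[s_j]=r_i\times\beta\Bigl(\sum_{j=1}^n[s_j]\Bigr)=r_i\times\sum_{j=1}^n[s_j]=\sum_{j=1}^n\bigl(r_i\times[s_j]\bigr)=0,$$
so the columns of $B'$ also sum to the zero column. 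The elementary linear-algebra fact is then proved by row and column operations: given $A\in R^{n\times n}$ with $\sum_i R_i=0$ and $\sum_j C_j=0$, fix the column to be removed and, inside ``$A$ with row $i'$ removed'', replace the surviving row $R_i$ by the sum of all surviving rows, which equals $-R_{i'}$; this does not change the determinant, and moving that row into place recovers $\pm$ the minor with row $i$ removed. The same operation on columns handles the column index, so all $(n-1)\times(n-1)$ minors of $B'$ agree up to sign, hence each equals $\pm U_\beta$.

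The point that needs the most care is the column relation, precisely the bookkeeping of the local-system lifts in the claim that the braid action fixes $\sum_j[s_j]$; everything else is formal. This $\pm$ ambiguity is the clean analogue, in the present setup, of the classical fact that one may delete any two columns from Alexander's $n\times(n+2)$ matrix without changing the resulting determinant up to a unit.
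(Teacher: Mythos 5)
Your proof is correct and follows essentially the same route as the paper: you establish that $B'$ is pseudostochastic by showing $\sum_{i=1}^n r_i=0$ in $H^1(D';\Theta^*)$ and that $\sum_{j=1}^n[s_j]$ is invariant under the braid action, and then invoke the fact that all first minors of a pseudostochastic matrix agree up to sign. The only minor divergence is in the proof of that last elementary fact, where the paper offers a center-of-mass/volume argument and an adjugate-matrix argument while you use row and column operations; the substance is the same.
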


The proof consists of two following lemmata.

\begin{defi}
A matrix, which has zero sum of elements in each row and each column, is called {\it pseudostochastic}.
\end{defi}

\begin{lemma}\label{lemma_pse} The submatrix $B'$  which consists of all even rows and odd columns of
$B$ is pseudostochastic.
\end{lemma}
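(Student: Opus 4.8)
The plan is to show that $B'$ is pseudostochastic by reducing to the generators $M_i,N_i,P_i,Q_i$ and showing that (i) the property ``$B'$ is pseudostochastic'' is preserved under multiplication, and (ii) it holds for each generator. Concretely, recall that $B$ is the matrix of $\beta$ on $W=\langle [s_1],[d_1],\dots,[s_n],[d_n]\rangle$, and $B'$ is the $n\times n$ submatrix of even rows and odd columns, so its entries are exactly the numbers $r_i\times\beta s_j$. A cleaner way to package this is to note that the $s$-coordinates form an $n$-dimensional quotient/subspace story: the even rows of $B$ record the $[d_\bullet]$-components of $\beta([s_\bullet])$ and of $\beta([d_\bullet])$. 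Since $B$ is a product of the generator matrices, I would first observe that the set of $2n\times 2n$ matrices whose even-row/odd-column submatrix is pseudostochastic is \emph{not} closed under multiplication in general; so instead one wants a more robust invariant.

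The better approach: work with the augmentation $t\mapsto 1$. Setting $t=1$, the local system becomes trivial, $H_1(D';\Theta)|_{t=1}=H_1(D';\mathbb Z)$, and the action of $\beta$ becomes the ordinary action of a (pure-ish) braid on the homology of the $2n$-punctured disk, which is generated by small loops around the punctures. In that basis the braid acts by a signed-permutation-type matrix and in particular preserves the class $\sum[\text{small loops}]$ and its dual functional. One then checks, from the explicit formulas in Section~\ref{sec_action}, that \emph{over $\ZZ$} the vectors $\beta([s_j]) + \beta([d_j])$ and the pairing $r_i$ satisfy compatible linear relations forcing each row and each column sum of $B'$ to vanish. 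Specifically: the column sums vanish because $\sum_i r_i$ is, up to the stated duality, represented by the union of intervals $(p_1,p_{2n-2})$, whose algebraic intersection with any lifted $\beta s_j'$ telescopes to $0$ (a loop crosses the $x$-axis an even number of times with cancelling fiber coordinates at $t=1$ and, with the chosen liftings, already at the level of $\ZZ[\tp,\tm]$); the row sums vanish because $\sum_j [s_j]=\sum_j[d_j]$ in $H_1(D';\Theta)$ (the Remark after the Proposition), so $r_i\times\big(\sum_j\beta[s_j]\big)=r_i\times\big(\sum_j\beta[d_j]\big)=r_i\times[d_i]-r_i\times[d_{i-1}]$, wait --- that is not obviously $0$, so I must be careful: the correct statement is that $r_i$ extends to a functional vanishing on all of $\sum_j[d_j]$ only after summing over $i$, hence it is the \emph{column} sums that vanish from this identity, and the \emph{row} sums that vanish from the cyclic identity $\sum_i r_i = 0$ in $H^1$, which follows from $\sum_i[d_i]=\sum_i[s_i]$ by the duality in the Remark.

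So the actual key steps, in order, are: (1) establish $\sum_{i=1}^n r_i = 0$ as an element of $H^1(D';\Theta^*)$, using the duality with $H_1(D',\bigcup\{p_i\};\Theta)$ and the boundary relation $\sum_i[d_i]=\sum_i[s_i]$ — this gives $\sum_i (r_i\times\beta[s_j]) = 0$ for every $j$, i.e.\ all column sums of $B'$ vanish; (2) establish the dual fact that $\sum_{j=1}^n [s_j] = \sum_{j=1}^n[d_j]$ is $\beta$-invariant (true since $\beta$ acts on homology), hence $r_i\times\sum_j\beta[s_j] = r_i\times\sum_j[s_j] = r_i\times\sum_j[d_j] = (r_i\times[d_i]) + (r_i\times[d_{i-1}])\cdot 0 + \dots$ — and using $r_i\times[d_{i-1}]=-1$, $r_i\times[d_i]=1$, $r_i\times[d_k]=0$ otherwise together with the cyclic convention $d_0=d_n$, the telescoping sum over $j$ gives $0$, so all row sums of $B'$ vanish; (3) combine. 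The main obstacle is bookkeeping the cyclic indexing ($d_0=d_n$, the extra cycle $d_n$ with its nonstandard lifting $\tp$) so that the relation $\sum_i r_i=0$ and the relation $\sum_i[d_i]=\sum_i[s_i]$ are applied with the right normalizations; once those two global relations are in hand, pseudostochasticity of $B'$ is immediate and does not even require inspecting the individual generator formulas. I would therefore present (1) and (2) carefully and deduce the Lemma in one line.
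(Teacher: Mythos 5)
Your final numbered argument is essentially the paper's own proof: pseudostochasticity of $B'$ reduces to the two identities $\sum_{i=1}^n r_i = 0$ in $H^1(D';\Theta^*)$ and $\beta\bigl(\sum_{j=1}^n[s_j]\bigr)=\sum_{j=1}^n[s_j]$, which are exactly the two facts the paper uses for the two families of sums. Two of your justifications need tightening, though. First, ``true since $\beta$ acts on homology'' does not make $\sum_j[s_j]$ invariant; invariance of a specific class must be checked, either directly on the generators $M_i,N_i,P_i,Q_i$ of Section~\ref{sec_action} (a one-line verification in each case) or by observing that $\sum_j[s_j]$ is represented by a boundary-parallel loop, which $\beta$ fixes. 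Second, $\sum_i r_i=0$ does not follow from $\sum_i[d_i]=\sum_i[s_i]$ ``by duality''; it follows by evaluating $\sum_i r_i$ on the basis of $H_1(D';\Theta)$: it kills every $[s_k]$ by definition and kills every $[d_k]$ by the telescoping $r_k\times[d_k]+r_{k+1}\times[d_k]=1-1=0$ (with the cyclic convention $d_0=d_n$), which is the very computation you perform in your step (2). Finally, $r_i\times\sum_j\beta[s_j]=r_i\times\sum_j[s_j]=0$ is immediate from $r_i\times[s_j]=0$, so the detour through $\sum_j[d_j]$, like the $t\mapsto 1$ augmentation in your opening paragraphs, can be deleted without loss.
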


\begin{proof} It follows from the previous section that 
$\sum_{i=1}^n [s_i]$ is invariant under the action of $B_{n,n}$.
Therefore $r_k\times \sum_{i=1}^n [\beta s_i] = r_k\times \sum_{i=1}^n [s_i] = 0$ and the fact about columns is proven.

Let us prove the condition about rows. For each $k$ we have $\sum_{i=1}^n r_i
\times [s_k] = 0, \sum_{i=1}^n r_i \times [d_k] = 0$. Therefore $\sum_{i=1}^n r_i=0$ in
$H^1(D';\Theta^*)$ and hence $\sum_{i=1}^n r_i \times [\beta s_k] = 0$.
\end{proof}

\begin{lemma}\label{lemma_min}All the first minors of a pseudostochastic matrix $C$ are equal modulo sign change.
\end{lemma}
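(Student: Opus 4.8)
The plan is to show that for an $n\times n$ pseudostochastic matrix $C$, deleting row $i$ and column $j$ always yields a determinant equal to $(-1)^{i+j}$ times a fixed quantity $m$, so that up to sign all first minors coincide. I would first handle the dependence on the deleted row index while keeping the column fixed, and then handle the column index by the transposed argument (the transpose of a pseudostochastic matrix is pseudostochastic, so symmetry applies).

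\textbf{Step 1: reducing to elementary row/column operations.} Since each row of $C$ sums to zero, the last column of $C$ equals $-(\text{sum of the first } n-1 \text{ columns})$; similarly the last row is minus the sum of the other rows. Fix the deleted column to be the last one, and compare the minor $C_{\hat n,\hat n}$ (delete row $n$, column $n$) with $C_{\hat i,\hat n}$ (delete row $i$, column $n$). In $C_{\hat i, \hat n}$ replace the row that originally was the $n$-th row of $C$ by minus the sum of all the other surviving rows together with itself — using the column-sum-zero property restricted to the first $n-1$ columns, that $n$-th row equals $-(\text{sum of rows } 1,\dots,\widehat{i},\dots,n-1)$ inside this submatrix. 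Carrying out this substitution and then moving the modified row into position $i$ via $n-1-i$ adjacent transpositions turns $C_{\hat i,\hat n}$ into $\pm C_{\hat n,\hat n}$.

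\textbf{Step 2: tracking the sign.} I would make the row-swap bookkeeping precise: deleting row $i$ instead of row $n$ and then restoring the natural order costs a sign $(-1)^{n-i}$; the column side, treated by applying Step 1 to $C^T$, costs $(-1)^{n-j}$. Multiplying, the minor obtained by deleting row $i$ and column $j$ equals $(-1)^{i+j}\det C_{\hat n,\hat n}$. In particular all $n^2$ first minors agree up to sign, which is the assertion of the lemma, and combined with Lemma \ref{lemma_pse} this gives $U_\beta = \pm \det(\text{any first minor of } B')$, completing the theorem.

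\textbf{Main obstacle.} The substantive content is not the sign count — that is routine permutation bookkeeping — but making airtight the claim that \emph{inside the submatrix obtained by deleting one column}, the leftover rows still satisfy a linear dependence. This is true precisely because pseudostochasticity gives zero column sums over the \emph{full} set of rows, so after deleting column $j$ the sum of \emph{all} rows of the submatrix is still zero; hence any one row is minus the sum of the rest, and this is exactly what lets us convert $C_{\hat i,\hat j}$ into $\pm C_{\hat n,\hat j}$ by a single row replacement plus reordering. I would state this dependence as the one lemma-internal claim to verify carefully, and then the rest is the sign computation of the two sequences of adjacent transpositions.
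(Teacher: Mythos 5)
Your proof is correct, and it takes a genuinely different route from the paper, which offers two arguments: a geometric one (the rows of $C$ are $n$ points with center of mass $0$ in the hyperplane $\sum x_i=0$, and a first minor is the oriented volume of the simplex they span, projected onto a coordinate hyperplane) and an algebraic one via the adjugate matrix $C'$, using $CC'=C'C=\det(C)\,I=0$ to conclude that the rows and columns of $C'$ agree up to sign. Your version by explicit row and column operations is more elementary and arguably more robust: unlike the adjugate proof it needs no genericity reduction (the paper assumes WLOG that some first minor is nonzero so that the rows span the hyperplane), and it works verbatim over any commutative ring, which is the actual setting here since the entries lie in $\mathbb Z[t^{1/2},t^{-1/2}]$ rather than a field. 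One sentence in your Step 1 is misstated: inside $C_{\hat i,\hat n}$ the $n$-th row is not $-(\text{sum of rows }1,\dots,\widehat{i},\dots,n-1)$ but that quantity minus $\mathrm{row}_i$ --- and the leftover term $-\mathrm{row}_i$ is precisely what makes the argument work, since adding all surviving rows to row $n$ turns it into $-\mathrm{row}_i$, whence $\det C_{\hat i,\hat n}=(-1)^{n-i}\det C_{\hat n,\hat n}$ after extracting the sign and reordering. Your concluding paragraph states the linear dependence correctly (the sum of \emph{all} $n$ rows vanishes in every column), so this is a slip of wording rather than a gap; with that sentence repaired the proof is complete.
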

\begin{proof} It follows from the definition of a pseudostochastic matrix that rows of $C$ (as vectors) lie in the hyperplane
$\sum x_i=0$ and their center of mass is 0. A first minor of $C$ is the oriented
volume of a simplex consisting of $n-1$ vectors, projected onto a coordinate hyperplane.
All such volumes are equal because $0$ is the center of mass of these
$n$ points, and all the angles between
coordinate hyperplanes and the hyperplane $\sum x_i=0$ are also equal.
\end{proof}

\begin{proof}[Alternative proof] Consider the matrix $C'$ which is
  complementary to $C$, i.e. the elements of $C'$ are the first minors of $C$. We have $$CC' =
det(C)I = 0.$$ Without loss of generality we suppose that at least one
first minor of $C$ is not zero. Then the rows of $C$, understood as vectors, lie in the hyperplane
$\sum_{i=1}^n x_i = 0$ and span it. Therefore, the rows of $C$ generate the vectors $e_{i+1}
- e_{i}, i =1,\dots,n-1$ where $\{e_j\}_{i=1}^n$ are the basis vectors. From the fact that $(e_{i+1} -
e_i)C' = 0$ we conclude that columns of $C'$ are the same modulo sign-change; it follows from $C'C =
0$ that the same is true for the rows.
\end{proof}

\begin{defi}\label{st}We say that $st^*(\beta)\in B_{n+1,n+1}$ is obtained from $\beta\in
B_{n,n}$ via adding a trivial component (Fig.\ref{void}, third picture) if $st^*(\beta)$ is constructed by taking
$2n+2$ strings, applying $\s_{2n+1}\s_{2n}\s_{2n+1}$ at the very bottom of the
obtained braid, and then applying $\beta$ on the first $2n$ strings.
\end{defi}

Similarly to Examples \ref{ex1},\ref{ex2} we use the notation of Definition \ref{def_e}. With the natural identification of $B_n$ as a subgroup of $B_{n+1}$ with the last string trivial we have $st^*(\beta)_1=\beta_1,st^*(\beta)_2=\sigma_{n+1}\beta_2, e(st^*(\beta))=\tm e(\beta)$. In order to prove that ``adding a trivial component'' (third picture in Fig.~\ref{void}) does not change our invariant $U_L$ we now calculate $r_{n+1}\times [s_{n+1}]$ for later use in Corollary~\ref{cor_trivial}. 
\begin{ex}
\label{examplest} We will prove that $(r_{n+1}\times [s_{n+1}])\cdot e(st^*(\beta))=e(\beta)$. Since $\s_{2n+1}\s_{2n}\s_{2n+1}=N_n$ acts as $s_{n+1}\xrightarrow{N_n}
\tp d_n$, while $r_{n+1}\times [s_{n+1}]=\tp, r_{n+1}\times [s_{i}]=0$ for
$i=1,...,n-1$, the statement follows from the fact that $(r_{n+1}\times
[s_{n+1}])\cdot (\tm e(\beta)) = e(\beta)$.
\end{ex}

\begin{cor}
\label{cor_trivial}
For each braid $\beta\in B_{n,n}$ the equality $e(\beta)U_\beta(t) = e(st^*(\beta))U_{st^*(\beta)}(t)$ holds.
\end{cor}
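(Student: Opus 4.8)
The plan is to read off the pseudostochastic matrix of $st^*(\beta)$ from that of $\beta$ and to compare one minor of each. By the computation just before Example~\ref{examplest} one has $e(st^*(\beta))=\tm\,e(\beta)$, and $e(\beta)$ is a unit in $\ZZ$, so Corollary~\ref{cor_trivial} is equivalent to $U_{st^*(\beta)}=\tp\,U_\beta$. Write $st^*(\beta)=\beta\circ N_n$ (Definition~\ref{st}), with $N_n=\s_{2n+1}\s_{2n}\s_{2n+1}$ acting first, where $\beta\in B_{n,n}\subset B_{n+1,n+1}$ is a word in $\s_1,\dots,\s_{2n-1}$ and hence is supported in a sub-disc $D_0$ meeting only $p_1,\dots,p_{2n}$.

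Two inputs about $\beta$ are needed. First, the inclusion of $D_0$ intertwines the $B_{n+1,n+1}$-action of $\beta$ with its $B_{n,n}$-action and fixes $[s_{n+1}]$; in particular, for $1\le i,j\le n-1$, $r_i\times\beta[s_j]$ is the corresponding entry of the $B_{n,n}$-matrix $B'$. Second, inspecting the formulas of Section~\ref{sec_action} for the only generators that can occur in $\beta$ --- namely $M_i,N_i,P_i$ for $i\le n-1$ and $Q_i$ for $i\le n$ --- none of them ever produces a $[d_n]$- or a $[d_{n+1}]$-component from a vector not already containing one; consequently $\beta[s_j]$ has zero $[d_n]$- and $[d_{n+1}]$-components for every $j\le n$, while $\beta[d_n]=[d_n]+w$ with $w$ in the span of $[s_1],\dots,[s_n],[d_1],\dots,[d_{n-1}]$. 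Feeding these, together with the explicit action $N_n\colon[s_j]\mapsto[s_j]$ $(j\le n-1)$, $[s_n]\mapsto[s_n]+[s_{n+1}]-\tp[d_n]$, $[s_{n+1}]\mapsto\tp[d_n]$, into $st^*(\beta)=\beta\circ N_n$, I obtain the matrix $B'^{new}=(r_i\times st^*(\beta)[s_j])_{1\le i,j\le n+1}$ in block form: on its first $n-1$ columns it agrees with $B'$ in the first $n-1$ rows and vanishes in the $[d_n]$- and $[d_{n+1}]$-rows, and its $[d_n]$-row, restricted to the columns $[s_1],\dots,[s_n]$, is $(0,\dots,0,-\tp)$ (the last entry being $-\tp$ times the coefficient of $[d_n]$ in $\beta[d_n]$, which is $1$).

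Now expand the defining minor $U_{st^*(\beta)}=\det\big((r_i\times st^*(\beta)[s_j])_{1\le i,j\le n}\big)$ along that last row: it equals $\pm\tp$ times the $(n-1)\times(n-1)$ minor of $B'$ in its first $n-1$ rows and columns, which is $U_\beta$; hence $U_{st^*(\beta)}=\pm\tp\,U_\beta$ (any other $n\times n$ minor of $B'^{new}$ would do, since all of them agree up to sign by Lemma~\ref{lemma_pse} and Lemma~\ref{lemma_min}). It remains to fix the sign, and this is exactly what Example~\ref{examplest} records: the contribution of the new strand, $(r_{n+1}\times[s_{n+1}])\cdot e(st^*(\beta))$, equals $e(\beta)$, which forces the $+$ sign, so $e(st^*(\beta))\,U_{st^*(\beta)}=\tm e(\beta)\cdot\tp U_\beta=e(\beta)\,U_\beta$.

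The main obstacle, I expect, is the two inputs above: the cycle $d_n$ of $B_{n+1,n+1}$ straddles the boundary of the support $D_0$ of $\beta$, so its image under $\beta$ is not purely local, and one must in addition verify that the ``cyclic'' cycle $d_{n+1}$ is never created --- only then does the matrix of $st^*(\beta)$ have the stated block shape. Keeping this bookkeeping, and the signs in Example~\ref{examplest}, straight is the crux; the determinant manipulation itself is routine.
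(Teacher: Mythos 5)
Your proof is correct and follows essentially the same route as the paper: both arguments identify the matrix of $st^*(\beta)$ as block-triangular over that of $\beta$, use Example~\ref{examplest} for the contribution $\tp$ of the new strand (compensating $e(st^*(\beta))=\tm e(\beta)$), and invoke Lemmata~\ref{lemma_pse} and~\ref{lemma_min} to pass freely between the minors involving $r_n,s_n$ and $r_{n+1},s_{n+1}$. You merely spell out the locality and bookkeeping (the vanishing of the $[d_n]$- and $[d_{n+1}]$-components of $\beta[s_j]$) that the paper's one-line computation leaves implicit.
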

\begin{proof}
Indeed, thanks to Example~\ref{examplest} and Lemmata \ref{lemma_pse},\ref{lemma_min} we see that $$e(\beta)U_\beta=e(\beta)((n-1)!)^2(r_1\wedge r_2\wedge \dots
\wedge r_{n-1})\times ([\beta s_1]\wedge \dots \wedge [\beta s_{n-1}])= $$
$$=e(st^*(\beta))(n!)^2(r_1\wedge r_2\wedge \dots
\wedge r_{n-1}\wedge r_{n+1})\times ([\beta s_1]\wedge \dots \wedge [\beta s_{n-1}]\wedge[\beta s_{n+1}])
=e(st^*(\beta))U_{st^*(\beta)}.$$
 
\end{proof}

\begin{defi} {\it Hanging a circle} is adding to a braid
$\beta\in B_{n,n}$ two new strings with numbers $2n+1, 2n+2$ with applied 
$\s_{2n}^2$ in the very top of the obtained braid, see Fig.\ref{void}, forth picture.
\end{defi}

\begin{ex}\label{ex4}Let $\beta$ be a colored braid, $\beta'$ is obtained from
$\beta$ by hanging a circle. Let $L,L'$ be the plait closures 
of $\beta,\beta'$ correspondingly. Then $U_{L'} =
U_{L}\cdot (\tp - \tm)$.
\end{ex}
\begin{proof} $U_{\beta} = (n-1)!(r_1\wedge r_2\wedge \dots
\wedge r_{n-1})\times ([\beta s_1]\wedge \dots \wedge [\beta s_{n-1}])$. Clearly $e(\beta')/e(\beta)=-1$. It follows from Corollary \ref{cor_trivial} that 
in order to obtain $U_{\beta'}$ we can add $r_{n+1}\times [s_{n+1}]$ to $U_{\beta}$. Then
$$r_{n+1}\times\s_{2n}^2 ([s_{n+1}]) = r_{n+1}\times([s_{n+1}] + (\tp -
\tm)[d_n]) = -(\tp - \tm).$$  For $i=1,...,n-1$ we have $r_{i}\times\s_{2n}^2 ([s_{n+1}]) = 0$. Finally,  the statement follows from $$r_{n+1}\times\s_{2n}^2 ([s_{n+1}]) \cdot e(\beta')/e(\beta) = (\tp - \tm).$$
That agrees with the calculation for the Hopf link.
\end{proof}

For a braid $\eta\in B_{n,n}$ we denote by $\tilde\eta$ the action on
$W$ induced by $\eta$.

\begin{defi}
We denote by $W_s$  the subspace of $W$
spanned by $[s_1],\ldots, [s_n]$ and by $W_d$ the subspace spanned
by $[d_1],\ldots, [d_n]$.
\end{defi}

\begin{cor}
\label{corollary}{a) Let $\eta_s,\eta_d$ be braids such that
$\tilde\eta_s,\tilde\eta_d\colon W \to W$ satisfy
$$\eta_s([s_i]) = [s_i] (i=1,\dots,n), \eta_d([d_i]) = [d_i] + ss_{i,1}(i=1,\dots,n),\eta_d([s_i]) =
ss_{i,2}(i=1,\dots,n)$$ where $ss_{i,*}$ are vectors in $W_s$. Then $U_{\beta} =
U_{\eta_s\beta} = U_{\beta \eta_d}$.

b)\label{important} More generally,  suppose that $W_s$ and $W_d$ are invariant under the action of $\tilde\eta_s$, and 
$W_s$ is invariant under the action of $\tilde\eta_d$. Then
$$U_{\eta_s\beta} = det({\tilde{\eta}_s}{|_{W_s}})\cdot U_{\beta}, U_{\beta
\eta_d} = det({\tilde{\eta}_d}{|_{W_d}})\cdot U_{\beta}.$$}
\end{cor}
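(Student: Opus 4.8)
The plan is to reduce everything to the matrix description of $U_\beta$ provided by the Auxiliary facts section, so that the two statements become statements about how a determinant changes when we compose with a triangular-in-block linear map. Recall that $U_\beta$ is (up to the constant $((n-1)!)^2$, which I will suppress) the determinant $\det(r_i\times[\beta s_j])_{i,j=1}^{n-1}$, and that by Lemma~\ref{lemma_pse} together with Lemma~\ref{lemma_min} this equals, up to sign, \emph{any} $(n-1)\times(n-1)$ minor of the pseudostochastic $n\times n$ matrix $B'$ obtained by taking the even rows and odd columns of the matrix $B$ of $\tilde\beta$ in the basis $\{[s_1],[d_1],\dots,[s_n],[d_n]\}$. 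Equivalently, $B'$ is the matrix of the ``$s\to d$ part'' of $\tilde\beta$: project $\tilde\beta[s_j]$ onto $W_d$ and read off coordinates. This is the object whose behaviour under left/right composition I want to track.

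For part a), left composition $\eta_s\beta$: since $\eta_s$ fixes every $[s_i]$, we have $\widetilde{\eta_s\beta}[s_j]=\tilde\eta_s(\tilde\beta[s_j])$, and I want to show the $W_d$-component of this is unchanged. Writing $\tilde\beta[s_j]=\sum_i a_{ij}[s_i]+\sum_i b_{ij}[d_i]$, applying $\tilde\eta_s$ gives $\sum_i a_{ij}[s_i]+\sum_i b_{ij}\tilde\eta_s[d_i]=\sum_i a_{ij}[s_i]+\sum_i b_{ij}([d_i]+ss_{i,1})$; since each $ss_{i,1}\in W_s$, the $W_d$-component is still $\sum_i b_{ij}[d_i]$, so $B'$ is literally unchanged and $U_{\eta_s\beta}=U_\beta$. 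For right composition $\beta\eta_d$: here $\widetilde{\beta\eta_d}[s_j]=\tilde\beta(\tilde\eta_d[s_j])=\tilde\beta(ss_{j,2})$ with $ss_{j,2}\in W_s$; writing $ss_{j,2}=\sum_k c_{kj}[s_k]$ we get $\widetilde{\beta\eta_d}[s_j]=\sum_k c_{kj}\tilde\beta[s_k]$, so the new $W_d$-part matrix is $B'C$ where $C=(c_{kj})$. Now I invoke the fact that $\eta_d$ also fixes $\sum[s_i]$ (as every element of $B_{n,n}$ does, by Lemma~\ref{lemma_pse}'s proof), which forces $C$ to be column-pseudostochastic-compatible, i.e. $C$ acts as the identity on the line spanned by $\sum e_i$; combined with the pseudostochasticity of $B'$ and Lemma~\ref{lemma_min}, the $(n-1)\times(n-1)$ minor of $B'C$ equals that of $B'$. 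The cleanest route: both $B'$ and $B'C$ are pseudostochastic $n\times n$ matrices (the row condition for $B'C$ follows since rows of $B'$ sum to zero and multiplying on the right preserves that; the column condition follows from invariance of $\sum[s_i]$), and any two pseudostochastic matrices with the property that one is obtained from the other by right multiplication by a map fixing $\sum e_i$ and acting with determinant $1$... — actually it is simpler still to note $\eta_d$ restricted to $W_s$ is the identity in part a), so $C=I$ and $B'C=B'$ outright. Hence $U_{\beta\eta_d}=U_\beta$.

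For part b), the argument is the same bookkeeping with the identity hypotheses replaced by invariance hypotheses. On the left: $\tilde\eta_s$ preserves $W_d$, so the $W_d$-component of $\tilde\eta_s(\tilde\beta[s_j])$ is $\tilde\eta_s|_{W_d}$ applied to the $W_d$-component of $\tilde\beta[s_j]$; reading this through the even-rows/odd-columns identification, the new matrix $B'$ becomes $E_d\cdot B'$ where $E_d$ is the matrix of $\tilde\eta_s|_{W_d}$. Then $U_{\eta_s\beta}=\det$ of an $(n-1)$-minor $=\det(E_d|_{\text{appropriate block}})\det(B'|_{\text{block}})$; to get exactly $\det(\tilde\eta_s|_{W_s})$ rather than $\det(\tilde\eta_s|_{W_d})$ I use that $\tilde\eta_s$ preserves both $W_s$ and $W_d$ and fixes $\sum[s_i]=\sum[d_i]$ (the remark after Proposition on $H_1$), which pins down $\det$ on $W_d$ in terms of $\det$ on $W_s$ — indeed both equal $\det\tilde\eta_s|_{\ker(\text{augmentation})}$ up to the shared fixed line, giving the asserted $\det(\tilde\eta_s|_{W_s})$. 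On the right: $\tilde\eta_d$ preserves $W_s$, so $\widetilde{\beta\eta_d}[s_j]=\tilde\beta(\tilde\eta_d[s_j])$ with $\tilde\eta_d[s_j]\in W_s$, hence the $W_d$-part matrix is $B'\cdot F_s$ with $F_s$ the matrix of $\tilde\eta_d|_{W_s}$, and again by pseudostochasticity and Lemma~\ref{lemma_min} the relevant minor multiplies by $\det(\tilde\eta_d|_{W_s})=\det(\tilde\eta_d|_{W_d})$, the last equality by the same fixed-line argument applied to $\tilde\eta_d$.

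The main obstacle I anticipate is the sign/normalization matching: Lemma~\ref{lemma_min} only gives minors up to sign, and $U_\beta$ is defined as a specific minor, so I must check that the minor actually computing $U_{\eta_s\beta}$ (resp. $U_{\beta\eta_d}$) is the \emph{same} distinguished minor (first $n-1$ even rows, first $n-1$ odd columns) and that the factor pulled out is honestly $\det(\tilde\eta_s|_{W_s})$ on the nose and not its reciprocal or its negative. Resolving this cleanly will use the pseudostochasticity of the product matrices — which guarantees every minor is $\pm$ the same thing, so the only ambiguity is a global sign — together with a direct check on the determinant of the full $n\times n$ matrices (e.g. specializing, or using that $\det\tilde\eta_s$ on all of $W$ factors as the product of its determinants on the $\sum$-fixed line, $W_s$, and $W_d$ modulo that line) to fix the sign. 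In the special case of part a) this is automatic since the multiplying matrices are identities.
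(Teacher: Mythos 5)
Your overall strategy --- reduce both parts to tracking how the matrix $B'$ of the ``$s$-to-$d$ part'' of $\tilde\beta$ changes under composition, then invoke pseudostochasticity (Lemmata \ref{lemma_pse}, \ref{lemma_min}) --- is exactly the paper's, but you have reversed the paper's multiplication convention, and this introduces genuine errors rather than a mere relabelling. In this paper $\beta\eta$ means $\beta$ is attached \emph{below} $\eta$ and braids act bottom-to-top, so $\widetilde{\eta\beta}=\tilde\beta\circ\tilde\eta$ (see Remark \ref{simple}, where $U_{\eta\beta}$ involves $r_i\times\beta(\eta s_{\sigma(i)})$). Hence in $U_{\eta_s\beta}$ the braid $\eta_s$ acts \emph{first}, and $\widetilde{\eta_s\beta}[s_j]=\tilde\beta(\tilde\eta_s[s_j])=\tilde\beta[s_j]$ immediately; while in $U_{\beta\eta_d}$ the braid $\eta_d$ acts \emph{last}, and since $\tilde\eta_d$ maps $W_s$ into $W_s$ and sends $[d_i]$ to $[d_i]$ plus an element of $W_s$, while $r_i$ annihilates $W_s$, one gets $r_i\times\tilde\eta_d(\tilde\beta[s_j])=r_i\times\tilde\beta[s_j]$ at once. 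You instead compute $\widetilde{\eta_s\beta}$ as $\tilde\eta_s\circ\tilde\beta$ and then substitute $\tilde\eta_s[d_i]=[d_i]+ss_{i,1}$ --- but that is the hypothesis on $\eta_d$, not on $\eta_s$; part a) imposes no condition on $\tilde\eta_s|_{W_d}$. Symmetrically, for $U_{\beta\eta_d}$ you arrive at $B'C$ with $C$ the matrix of $\tilde\eta_d|_{W_s}$ and then assert that ``$\eta_d$ restricted to $W_s$ is the identity in part a)'', which is false: the hypothesis is only $\eta_d([s_i])=ss_{i,2}\in W_s$, an arbitrary element of $W_s$. (Your two computations are in fact the correct ones for the \emph{opposite} sides; once the convention is fixed, the ``$C=I$'' step legitimately applies to $\eta_s$, which does fix each $[s_i]$.)

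The same reversal propagates into part b) and makes it heavier than necessary: with the paper's convention, $\eta_s$ acting first multiplies $B'$ on the right by the matrix of $\tilde\eta_s|_{W_s}$, so the factor $\det(\tilde\eta_s|_{W_s})$ appears directly and no identity $\det(\tilde\eta_s|_{W_s})=\det(\tilde\eta_s|_{W_d})$ is required; your ``fixed-line'' argument for that identity is in any case suspect, because the relevant module $W$ is the free $2n$-dimensional space on the symbols $[s_i],[d_i]$ (see the Remark in Section \ref{sec_action}), in which $\sum[s_i]$ and $\sum[d_i]$ are \emph{not} the same vector. What does survive from your write-up, and is genuinely needed to flesh out the paper's one-line part b), is the Cauchy--Binet/pseudostochasticity bookkeeping showing that a fixed first minor of the product equals $\det(\tilde\eta_s|_{W_s})$ times the corresponding minor of $B'$ (using that all cofactors of the pseudostochastic $B'$ coincide and that $\tilde\eta_s$ fixes $\sum[s_i]$), together with the sign check you correctly flag. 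So: right skeleton, but as written the proof rests on a misreading of the composition order and on the false claim $\tilde\eta_d|_{W_s}=\mathrm{id}$; swapping the two halves repairs it.
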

\begin{proof} a)It is clear that the transformations  $\tilde\eta_s,\tilde\eta_d$ do not change the submatrix
$B'$. b) In the above hypothesis the minors of $B'$ are multiplied by
the determinant of the matrix of $\tilde\eta_s$ (resp.  $\tilde\eta_d$)
restricted to $W_s$ (resp. $W_d$). In the first case we have
$det({\tilde{\eta}_s}{|_{W_s}})$, in the second case we have
 $det({\tilde{\eta}_d}{|_{W_d}})$.
\end{proof}

\section{Proof of Theorem 1}\label{sec_proof}

\begin{defi} Let $K_{2n}$ be the subgroup of $B_{2n}$
generated by

$$
\begin{array}{lcl}
  \s_1, \\
  A=\s_2 \s_1^2 \s_2, \\
  A_i=\s_{2i}\s_{2i-1}\s_{2i+1}\s_{2i},  i=1,\dots,n-1 \\
\end{array}
$$
\end{defi}

\begin{defi}
 Given a braid $\eta\in B_{2n}$ with $2n$ strings, the {\it stabilization}
 $st(\eta)\in B_{2n+2}$ or {\it adding a
 ``trivial loop''} is the addition of two new strings with numbers $2n+1,2n+2$ and applying $\s_{2n+1}$ at the bottom of the obtained braid.
\end{defi}
It is easy to see, that neither stabilization nor elements of $K_{2n}$ change the topological type of the plait closure of a braid.

Let links $L_1,L_2$ be the plait closures of two braids  $\beta_1\in B_{2n_1}, \beta_2\in B_{2n_2}$.

\begin{lemma} (J. Birman, Theorem 1', \cite{Birman}). \label{lemma_birman}The links $L_1,L_2$ are equivalent if and only if after adding a number of ``trivial loops'' to each component of $L_1$ and $L_2$;
   $\beta_1 \to \beta_1'\in B_{2n}, \beta_2 \to \beta_2'\in
B_{2n}$, these braids $\beta_1',\beta_2'$ lie in the same coset of $K_{2n}$, i.e.

$$\beta_1' = g\beta_2' h, \beta\in B_{2n}, g,h\in K_{2n}.$$
\end{lemma}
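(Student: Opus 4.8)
The plan is to prove this as a plat-closure analogue of Markov's theorem, splitting into the easy and the hard implication. The key observation is that the plait closure $\widehat{\beta}$ is nothing but a bridge presentation of $L$ with respect to the height function: the $n$ top caps and $n$ bottom cups are the maxima and minima, and $\beta\in B_{2n}$ records the tangle in between. First I would dispatch the \emph{if} direction by an explicit isotopy check, showing that each listed move fixes $\widehat{\beta}$ up to ambient isotopy. Right multiplication $\beta\mapsto\beta h$ with $h\in K_{2n}$ inserts $h$ just below the top caps; since each generator $\s_1$, $A=\s_2\s_1^2\s_2$, $A_i=\s_{2i}\s_{2i-1}\s_{2i+1}\s_{2i}$ extends to a homeomorphism of the disk carrying the cap system to itself (rotating a single cap, turning a cap over, and sliding one cap past an adjacent one, respectively), we get $\widehat{\beta h}=\widehat{\beta}$, and symmetrically for left multiplication near the bottom cups. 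The stabilization $st$ inserts $\s_{2n+1}$ together with one extra maximum--minimum pair; this is a Reidemeister-I--type kink adding a trivial loop to a component, and again leaves the link type unchanged.

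The substance is the \emph{only if} direction. Given an ambient isotopy from $L_1$ to $L_2$, I would make the height function generic along the isotopy, so that at all but finitely many times the link is in bridge position (determining a braid word in some $B_{2n}$, well defined up to ordinary braid isotopy), and at the exceptional times a single elementary catastrophe occurs. By the standard Cerf-theoretic analysis of one-parameter families of Morse functions, these events are of exactly three types: birth or death of a cancelling maximum--minimum pair; interchange of the heights of two adjacent critical points; and a critical point passing a cap or cup. I would then track the effect of each event on the recorded braid word. A birth or death is precisely a stabilization $st$ or its inverse, changing the bridge number by one; a height interchange away from the extremal caps amounts to a braid relation and stays inside the same element of $B_{2n}$; and an event of the third type, or a height interchange adjacent to the caps, multiplies the current word on the right (for top events) or on the left (for bottom events) by one of the cap-preserving moves.

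It then remains to identify the group of cap-preserving moves with $K_{2n}$. This is a presentation statement for the subgroup of $B_{2n}$ consisting of braids that extend to homeomorphisms fixing the system of caps (the Hilden-type stabilizer): I would show that every such move is generated by $\s_1$, $A$, and $A_i$, so that the accumulated left and right factors collected along the isotopy land in $K_{2n}$. Composing all the events, and adding trivial loops to both plats until they share a common strand number $2n$, finally yields $\beta_1'=g\beta_2'h$ with $g,h\in K_{2n}$, which is exactly the assertion; the converse ``if'' direction established in the first paragraph shows these relations suffice.

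The main obstacle is the \emph{only if} direction, and within it two intertwined points: the Cerf-theoretic claim that a generic link isotopy decomposes into only the three elementary events above with no further degeneration, and the completeness of the generating set $\{\s_1,A,A_i\}$ for the cap-stabilizer, i.e. that no cap-preserving move escapes $K_{2n}$. These are the delicate technical steps, and for their full verification I would follow \cite{Birman}, the sketch above serving to explain why exactly the generators of $K_{2n}$ and the stabilization move, and nothing more, are forced to appear.
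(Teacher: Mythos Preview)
The paper does not prove this lemma at all: it is stated as an external result, attributed to Birman (Theorem~1' of \cite{Birman}), and used as a black box in the proof of Theorem~\ref{th_main}. There is therefore no ``paper's own proof'' to compare your sketch against.

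Your outline is a reasonable summary of the strategy behind Birman's original argument (Cerf theory on the height function along a generic isotopy, plus identification of the cap-stabilizer with the Hilden subgroup $K_{2n}$), and you correctly flag the two genuinely hard ingredients. For the purposes of this paper, however, the expected ``proof'' is simply a citation; reproducing Birman's argument is neither required nor attempted here.
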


This formulation is not very accurate since we defined stabilization
at the right end of a braid and here we are required to perform it for
each component of links $L_1,L_2$. Still, we can repair that by the
following rewording. Given two braids $\beta_1,\beta_2$ we allow the
following operations: applying the stabilization $\eta\to st(\eta)\in
B_{2k}$, then adding elements of
$K_{2k}$ from the left and from the right. If the plait closures of
$\beta_1,\beta_2$ are equivalent, then there are sequences of such
operations which produce the same braid starting from $\beta_1\in
B_{2n_1}$ and $\beta_2\in B_{2n_2}$.   

We are going to use this result for braids in $B_{n,n}$. For that we
need to care about coloring of the braids since elements of $K_{2n}$
do not respect the coloring. Our goal is to prove that
$U_{L_1}=U_{L_2}$ if $L_1$ is equivalent to $L_2$. For
that, we replace the
operation $st$ with $st^*$ (Def. \ref{st}). Lemma \ref{lemma_birman} holds with $st^*$ instead of $st$. Then, we can assume that our braids are oriented and we perform plait
closure with respect to the orientations of the strings. Indeed, $A,A_i$ respect the orientation and $\s_1$ change two strings in the same pair, which we link while performing the plait closure.

Now, if we have $\beta_1' = g\beta_2' h, \beta\in B_{2n}, g,h\in
K_{2n}$ at the end, we can apply a number of $\s_{2i+1}$ on the top
and bottom of the braids such that the strings oriented
down connect the points with odd numbers, like in Fig.\ref{fig}. So, we get 
$m_1\beta_1m_1'=m_2\beta_2m_2'$ where $m_i,m_i'\in K_{2n}, i=1,2$ respect the
coloring. 

\begin{cor}
Hence, in order to prove Theorem \ref{th_main} it is enough to prove that $$e(m_1\beta_1m_1')U_{m_1\beta_1m_1'}=e(m_2\beta_2m_2')U_{m_2\beta_2m_2'}$$ if $m_1\beta_1m_1'=m_2\beta_2m_2'$ in $B_{2n}$, and $\beta_1\in B_{n_1,n_1},\beta_2\in B_{n_2,n_2}$, and $m_i,m_i'\in K_{2n}, i=1,2$ respect the coloring. 
\end{cor}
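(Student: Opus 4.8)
The plan is to turn the discussion preceding the statement into a short deduction. Assume $L_1$ and $L_2$ are equivalent links, realized as plait closures of colored braids $\beta_1\in B_{n_1,n_1}$ and $\beta_2\in B_{n_2,n_2}$. I would first apply Lemma~\ref{lemma_birman} in the form stated there but with the coloring-preserving stabilization $st^*$ in place of $st$: after finitely many applications of $st^*$ (``adding a trivial component'') the two braids become colored braids $\beta_1',\beta_2'\in B_{n,n}\subset B_{2n}$ lying in the same $K_{2n}$-double coset, say $\beta_1'=g\beta_2'h$ with $g,h\in K_{2n}$. Because $st^*$ sends colored braids to colored braids, this step stays inside $B_{n,n}$, and by Corollary~\ref{cor_trivial} the quantity $e(\beta)U_\beta$ is not changed by $st^*$, so it suffices to establish $e(\beta_1')U_{\beta_1'}=e(\beta_2')U_{\beta_2'}$.

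The second step is to make the coset identity respect the coloring. Among the generators of $K_{2n}$, the elements $A$ and $A_i$ already preserve the coloring; the only offender is $\s_1$ (and its conjugates), which merely interchanges the two points $p_1,p_2$ of a single plait pair, i.e. two strings that are glued to one another when the plait closure is formed, and hence does not alter the closure. Inserting suitable transpositions $\s_{2i+1}$ at the top and at the bottom, exactly as in Fig.~\ref{fig}, rewrites $\beta_1'=g\beta_2'h$ in the required shape $m_1\beta_1 m_1'=m_2\beta_2 m_2'$ in $B_{2n}$, with $m_1,m_1',m_2,m_2'\in K_{2n}$ now all respecting the coloring and $\beta_1,\beta_2$ colored braids. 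These insertions are again trivial moves on the plait closure, so they change neither $L_i$ nor the associated $e(\beta)U_\beta$; thus $e(\beta_i')U_{\beta_i'}$ equals the corresponding side of the displayed equality in the statement, and $U_{L_1}=U_{L_2}$ follows from that equality.

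The only point that I expect to need genuine care is the coloring bookkeeping in the second step: one must check that Lemma~\ref{lemma_birman} indeed survives the replacement of $st$ by $st^*$, and that the padding by the $\s_{2i+1}$'s can be carried out on both sides simultaneously so that all four of $m_1,m_1',m_2,m_2'$ land in $K_{2n}$ and preserve the coloring at once. Granting this, the corollary is just the combination of Lemma~\ref{lemma_birman}, Corollary~\ref{cor_trivial}, and the remark that the recoloring moves are trivial; the substantive remaining work is the displayed equality itself, which is where the multiplicativity of $U_\beta$ under $W_s$- and $W_d$-preserving braids (Corollary~\ref{corollary}) will enter.
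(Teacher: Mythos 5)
Your proposal is correct and follows essentially the same route as the paper, which derives this corollary directly from the preceding discussion: Birman's stable-equivalence result (Lemma~\ref{lemma_birman}) with $st$ replaced by the coloring-preserving stabilization $st^*$, invariance of $e(\beta)U_\beta$ under $st^*$ via Corollary~\ref{cor_trivial}, and padding by transpositions $\s_{2i+1}$ (trivial on the plait closure) to force the double-coset representatives to respect the coloring. The points you flag as needing care are exactly the ones the paper also treats informally, and the substantive remaining work is, as you say, deferred to the subsequent lemmas.
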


\begin{defi} Let $R_n$ be the subgroup of $B_{2n}$ generated by the following elements:
$$
\begin{cases}
a)&\s_{2i-1}^2, i =1,\dots, n \\
b)&\s_1 A \s_1, \\
c)&A, \\
d)&A_i, i = 1,\dots, n-1 \\
e)&\s_{2i-1}A_i\s_{2i+1}, \s_{2i+1}A_i\s_{2i-1}, i = 1,\dots, n-1.\\
\end{cases}
$$
\end{defi}
Clearly, elements of $R_n$ respect the coloring.

For a sequence  $\s_{2j_1+1}^{n_1}A^{k_1}\s_{2j_2+1}^{n_2}A_{i_1}^{k_2}\s_{2j_3+1}^{n_3}A^{k_3}\s_{2j_4+1}^{n_4}A_{i_2}^{k_4}\dots,$ where all the indices are integers we define its length as $|k_1|+|k_2|+\dots.$

\begin{lemma}\label{lemma_color} If $g\in K_{2n}$ preserves the coloring, then $g\in R_n$.
\end{lemma}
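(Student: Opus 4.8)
The plan is to induct on the \emph{length} just introduced, peeling letters off one end of a word representing $g$ and at each step invoking either the fact that $A,A_i\in R_n$ or one of the ``sandwich'' relations built into the generators of $R_n$.

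First some bookkeeping, before the induction. The permutation image of $K_{2n}$ in $S_{2n}$ is the group $G$ generated by the transposition $\bar\sigma_1=(1\,2)$, by $\bar A=\mathrm{id}$, and by the rigid block transpositions $\bar A_i$ (which interchange $\{2i-1,2i\}$ and $\{2i+1,2i+2\}$); thus $G$ is the group of colour patterns, and a braid in $K_{2n}$ respects the colouring exactly when its image lies in the subgroup $G_0\leqslant G$ generated by the $\bar A_i$. A direct braid-relation computation gives the conjugation identities $A_i\sigma_{2i-1}^{\pm1}A_i^{-1}=\sigma_{2i+1}^{\pm1}$ and $A_i\sigma_{2i+1}^{\pm1}A_i^{-1}=\sigma_{2i-1}^{\pm1}$; in particular every $\sigma_{2j-1}$ lies in $K_{2n}$, and $\sigma_1$ conjugated by a suitable product of $A_i$'s is the flip of an arbitrary block. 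One also checks that $A$ commutes with $\sigma_1$ and with every $\sigma_{2j-1}$ with $2j-1\geqslant5$, and that $A_i$ commutes with every $\sigma_{2j-1}$ with $j\notin\{i,i+1\}$. Finally, $R_n\subseteq K_{2n}$ and $R_n$ respects the colouring, both immediate from the generators: each generator of $R_n$ is visibly a product of the elements above, and each maps into $G_0$ --- in fact to $\mathrm{id}$ or to some $\bar A_i$.

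With this in hand, let $g\in K_{2n}$ respect the colouring. Write it as a word $w$ in the alphabet $\{\sigma_{2j-1}^{\pm1}\}_j\cup\{A^{\pm1}\}\cup\{A_i^{\pm1}\}_i$, brought to the alternating shape $\sigma^{(0)}y_1\sigma^{(1)}y_2\cdots y_m\sigma^{(m)}$ used just before the lemma: each $y_l$ is a power of $A$ or of an $A_i$, and each $\sigma^{(l)}$ is a product of block flips, tidied up as $\prod_j\sigma_{2j-1}^{c_{l,j}}$ since flips in distinct blocks commute; its length is $\sum_l|k_l|$, the total exponent carried by the $y_l$. I would induct on this. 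The key remark is that left multiplication by an element of $R_n$ leaves the right coset $R_n g$ unchanged, so it suffices to transform $w$, modulo such left multiplications, into a word with no $y_l$. If the leading letter of $w$ is some $A^{\pm1}$ or $A_i^{\pm1}$, it lies in $R_n$ and is deleted, strictly dropping the length. Otherwise the leading letter is a flip $\sigma_{2j-1}^{\pm1}$, and I look at the first $A$-type letter it meets on scanning rightwards: if that letter and all flips in between commute with it, I slide it to the front and delete it; in the remaining ``overlapping'' cases --- block $1$ meeting a power of $A$, or block $i$ or $i{+}1$ meeting a power of $A_i$ --- I use the conjugation identities together with the membership of $\sigma_1A\sigma_1$, of $\sigma_{2i\pm1}A_i\sigma_{2i\mp1}$ and of $\sigma_{2j-1}^2$ in $R_n$ to rewrite the relevant two-letter block as an element of $R_n$ times a word in flips, absorb the $R_n$-part on the left, and continue with strictly smaller length.

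Iterating, one ends with $R_n g=R_n w_0$ where $w_0=\prod_j\sigma_{2j-1}^{c_j}$ is a word in flips only; since $w_0$ differs from $g$ by an element of $R_n$ it respects the colouring, which forces every $c_j$ to be even, whence $w_0\in\langle\sigma_{2j-1}^2\rangle\subseteq R_n$ and so $g\in R_n$ --- this is also the base case of the induction. The step I expect to cost the real work is precisely the case analysis just sketched: for every adjacency of a block flip with an $A$-type generator whose blocks overlap, one must check that the two-letter product can genuinely be brought to the shape (element of $R_n$)$\cdot$(word in flips of length $0$) using only braid relations, the conjugation identities, and membership of $A$, $A_i$, $\sigma_1A\sigma_1$, $\sigma_{2i\pm1}A_i\sigma_{2i\mp1}$, $\sigma_{2j-1}^2$ in $R_n$; the non-commuting overlaps (in particular $A$ against $\sigma_3$) demand a careful braid manipulation, which is exactly what the specific list of generators of $R_n$ was chosen to make possible, and one must also confirm that the peeling terminates. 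A more mechanical alternative would be to observe that the assertion is equivalent to $\ker(K_{2n}\to G)\subseteq R_n$ and to verify this on a Reidemeister--Schreier generating set of that kernel.
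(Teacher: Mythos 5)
Your strategy coincides with the paper's: bring $g$ to the alternating normal form, peel the $A$-type letters off the left using the far-commutations and the conjugation identities $A_i\sigma_{2i\mp1}^{\pm1}A_i^{-1}=\sigma_{2i\pm1}^{\pm1}$, reduce to a product of flips $\prod_j\sigma_{2j-1}^{c_j}$, and use the coloring hypothesis to force every $c_j$ to be even. Your bookkeeping (the hyperoctahedral image of $K_{2n}$, the fact that $\sigma_1$ commutes with $A$, the precise list of which $\sigma_{2j-1}$ commute with which $A$-type letters) is correct and in places more explicit than the paper's own wording.

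The difficulty is the one case you name but do not carry out: a residual \emph{odd} power of $\sigma_3$ sitting immediately to the left of a power of $A$. This case genuinely occurs and is not covered by any of the tools you list. Concretely, take $g=\sigma_1A_1\sigma_1^{-1}\,A\,\sigma_1A_1^{-1}\sigma_1^{-1}\in K_4$: it preserves the coloring, and the moves you allow reduce it exactly to $A_1(\sigma_3A\sigma_3^{-1})A_1^{-1}$, i.e.\ to the question of whether $\sigma_3A\sigma_3^{-1}\in R_2$. Here $\sigma_3$ does not commute with $A$, the conjugation identities only move flips past the $A_i$ (not past $A$), and no generator of $R_n$ involves $\sigma_3$ and $A$ together, so your appeal to ``the specific list of generators of $R_n$'' does not discharge this case. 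What is needed is an additional braid identity expressing $\sigma_3A\sigma_3^{-1}$ in the generators of $R_n$; a linking-number count shows that in the abelianized pure braid group it must agree with $A_1^{2}A^{-1}$ (``strand $4$ encircling the first block'' is ``the whole second block encircling the first'' corrected by ``strand $3$ encircling the first''), and an identity of that shape, valid in $B_4$ up to $\sigma_{2j-1}^{\pm2}$ factors, has to be stated and verified for the peeling to terminate. To be fair, the paper's proof is equally silent at exactly this point (it simply asserts ``we can always nip off''), so you have reproduced the argument together with its weakest link; but as written your proof, like the paper's, is incomplete at the only step of the lemma that is not routine, and the final sentence about a Reidemeister--Schreier verification restates the problem rather than solving it.
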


\begin{proof} We can  
present $g$ as a product $l=\s_{1}^{n_1}A^{k_1}\s_{1}^{n_2}A_{i_1}^{k_2}\s_{1}^{n_3}A^{k_3}\s_{1}^{n_4}A_{i_2}^{k_4}\dots$. Now we will nip off generators of $R_n$ from the left of $g$, decreasing the length of 
this sequence. We also use that $\s_1$ commutes with $A_i,i\geq 2$, and $\s_3$ commutes with $A_i,i\geq 3$, and $\s_{2k-1}, k =3,\dots, n$ commutes with $A$ and $A_i, i\geq k+1, i\leq k-2$. Using these commuting relations we try to push the letters $A, A_i$ towards the left end of the string. So, starting with $g$ we can always nip off $k_1$ copies of $A$, then $k_2$ copies of $A_{i_1}$ etc. Note, that we allow negative powers and a nipping may increase the powers of $\s_{2i+1}$ in the string. At the end of the process, all $A$ and $A_i$ terms have been nipped off, leaving only a sequence of the type $\prod_i\s_{2k_i+1}$. Using the fact that $\s_{2i+1}$ commutes with $\s_{2j+1}$ we can nip off all the generators of the type $\s_{2i+1}^2$ and end up with a sequence $\prod_i\s_{2k_i+1}$ where all the indices are distinct numbers. Since $g$ preserves colorings the latter product must be $1$ in $K_{2n}$.
\end{proof}

\begin{lemma}\label{lemma_coset} For each element $\gamma\in R_n$ and each $\beta\in B_{n,n}$
  we have $e(\gamma\beta)U_{\gamma\beta}=e(\beta \gamma)U_{\beta \gamma}=e(\beta)U_\beta$.
\end{lemma}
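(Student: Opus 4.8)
The plan is to reduce the statement to the generators a)--e) of $R_n$ and then check each of them by computing the induced action on $W$ explicitly.

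First I would record a multiplicativity principle: if $\gamma',\gamma''$ both satisfy $e(\gamma'\beta)U_{\gamma'\beta}=e(\beta\gamma')U_{\beta\gamma'}=e(\beta)U_\beta$, and likewise with $\gamma''$, for every $\beta\in B_{n,n}$, then so does $\gamma'\gamma''$ (apply the identity for $\gamma'$ to $\gamma''\beta\in B_{n,n}$, then the one for $\gamma''$) and so does $(\gamma')^{-1}$ (apply the identity for $\gamma'$ to $(\gamma')^{-1}\beta$). Since $R_n$ is a group, it suffices to treat the generators a)--e). I would also note that $\beta\mapsto\beta_1$ and $\beta\mapsto\beta_2$ (deletion of the even, resp. odd, strings) are group homomorphisms $B_{n,n}\to B_n$ and that $\phi$ is the exponent-sum homomorphism, whence $e(\gamma\beta)/e(\beta)=e(\beta\gamma)/e(\beta)=(\tp)^{\phi(\gamma_1)}(-\tm)^{\phi(\gamma_2)}$, a scalar $\varepsilon(\gamma)$ depending only on $\gamma$; so the claim for a fixed $\gamma$ becomes $\varepsilon(\gamma)U_{\gamma\beta}=\varepsilon(\gamma)U_{\beta\gamma}=U_\beta$.

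Next I would rewrite the generators in terms of the $M_i,N_i,P_i,Q_i$ of Section~\ref{sec_action} whenever possible: a) is $Q_i=\sigma_{2i-1}^2$; b) is $\sigma_1A\sigma_1=(\sigma_1\sigma_2\sigma_1)^2=M_1^2$; and in e), grouping letters gives $\sigma_{2i-1}A_i\sigma_{2i+1}=(\sigma_{2i-1}\sigma_{2i}\sigma_{2i-1})(\sigma_{2i+1}\sigma_{2i}\sigma_{2i+1})=M_iN_i$, while commuting $\sigma_{2i-1}$ past $\sigma_{2i+1}$ gives $\sigma_{2i+1}A_i\sigma_{2i-1}=N_iM_i$. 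For these the action on $W$ follows by composing the formulas of Section~\ref{sec_action}; only c) $A=\sigma_2\sigma_1^2\sigma_2$ and d) $A_i=\sigma_{2i}\sigma_{2i-1}\sigma_{2i+1}\sigma_{2i}$ require a new computation, which I would do by drawing the braid and decomposing each image cycle into basic semicircles as in the figures of Section~\ref{sec_action}. In each of the five cases one checks that $\tilde\gamma$ preserves $W_s$ (every generator permutes the pairs of marked points, hence the circles $s_j$, up to monodromy and sign) and that, in addition, either $W_d$ is preserved too or $\tilde\gamma$ fixes $W_s$ pointwise; Corollary~\ref{corollary} then yields $U_{\gamma\beta}=\det(\tilde\gamma|_{W_s})U_\beta$ and $U_{\beta\gamma}=\det(\tilde\gamma|_{W_d})U_\beta$, the latter determinant computed on the induced map of $W/W_s\cong W_d$.

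What is left is to verify, generator by generator, that $\varepsilon(\gamma)\det(\tilde\gamma|_{W_s})=1$ and $\varepsilon(\gamma)\det(\tilde\gamma|_{W_d})=1$. For $Q_i$ this is trivial: $\tilde Q_i$ is the identity on $W_s$ and on $W/W_s$, and $\varepsilon(Q_i)=1$ since both sub-braids of $\sigma_{2i-1}^2$ are trivial. For $M_1^2$ one finds on $W_s$ that $s_1\mapsto t^{-1}s_1$, $s_2\mapsto(1-t^{-1})s_1+s_2$, the other $s_j$ fixed, so the determinant is $t^{-1}$; the induced map on $W/W_s$ also has determinant $t^{-1}$; and $\varepsilon(M_1^2)=(\tp)^2=t$. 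For $M_iN_i$ (and symmetrically $N_iM_i$) the action on $W_s$ is the transposition $s_i\leftrightarrow s_{i+1}$, of determinant $-1$, the induced map on $W/W_s$ likewise has determinant $-1$, and $\varepsilon(M_iN_i)=(\tp)(-\tm)=-1$. I expect the genuinely laborious point to be the same verification for $A$ and $A_i$: one must carry the monodromy coefficients $\tp,\tm,\td$ and all the signs through the braid picture, and must read off the odd and even sub-braids correctly --- e.g. the full twist $\sigma_1^2$ inside $A$ produces a double crossing of the two black strands carrying $p_1$ and $p_3$, so $\phi(A_1)=2$, $\phi(A_2)=0$, $\varepsilon(A)=t$, which must cancel $\det(\tilde A|_{W_s})=t^{-1}$. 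With all five generators handled, the multiplicativity principle of the first step completes the proof.
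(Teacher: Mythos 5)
Your proposal is correct and follows essentially the same route as the paper: reduce to the generators a)--e) of $R_n$ (the paper does this implicitly, and likewise factors them as $\s_1A\s_1=M_1^2$, $A=M_1P_1^{-1}M_1$, $A_i=M_iP_i^{-1}N_i$, $\s_{2i-1}A_i\s_{2i+1}=M_iN_i$), compute the explicit action on $W$, invoke Corollary~\ref{corollary} to get $U_{\gamma\beta}=\det(\tilde\gamma|_{W_s})U_\beta$ and $U_{\beta\gamma}=\det(\tilde\gamma|_{W_d})U_\beta$, and check that these determinants ($1$, $t^{-1}$, $t^{-1}$, $-1$, $-1$ respectively) are cancelled by $e(\gamma\beta)/e(\beta)$. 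Your predicted values all agree with the paper's computations, so the only thing missing is carrying out the deferred bookkeeping for $A$ and $A_i$, which the paper does explicitly.
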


\begin{proof} In what follows we
use the explicit formulae for the action (Section \ref{action}) and Corollary \ref{important}.

a)$\gamma=\s_{2i-1}^2$, applied in the bottom doesn't change $\beta s_i$
at all, and if we apply $\gamma$ in the very top, it adds some amount of $s_i$ to $d_i, d_{i-1}$, which has no influence on  $U_\beta$.

b) $\gamma=\s_1 A \s_1 = \s_1 \s_2 \s_1 \cdot \s_1 \s_2 \s_1$, acts on the standard basis for $W$ by

$$
\s_1 A \s_1 \left \{
 \begin{array}{lcl}
   d_0 \to d_0 + (1 - t^{-1})d_1 \\
   s_1 \to t^{-1} s_1 \\
   d_1 \to t^{-1} d_1 \\
   s_2 \to s_2 + s_1(1-t^{-1})  \\
 \end{array}
\right.
$$

The matrix of the action on $d_0,d_1$ (in which we are interested 
when we add $\s_1 A \s_1$ to the very top of the braid) is the same as the
matrix of the action on $s_2,s_1$ which corresponds to adding to to the bottom.

$$ det(\widetilde{\beta\gamma}|_{W_d})=det(\widetilde{\gamma\beta}|_{W_d})=det(\tilde\gamma|_{W_s})
\begin{vmatrix}
  1  & 1-t^{-1}\\
  0  & t^{-1}  \\
\end{vmatrix}
$$

Matrix determinant equals $t^{-1}$, therefore
$U_{\gamma\beta}=U_{\beta\gamma}=U_{\beta}\cdot t^{-1}$. The new term
$t^{-1}$ is compensated by $e(\gamma\beta)/e(\beta)=t$.

c)$\gamma=A = \s_2 \s_1^2 \s_2 = \s_1\s_2\s_1 \cdot \s_2^{-2} \cdot
\s_1\s_2\s_1$.

$$
A \left \{
 \begin{array}{lcl}
   d_0 \to d_0 + (1-t^{-1})d_1 + (t^{-\frac{3}{2}} - \tm)s_1 \\
   s_1 \to t^{-1}s_1 \\
   d_1 \to t^{-1}d_1 -(t^{-\frac{3}{2}} - \tm)s_1 \\
   s_2 \to s_2 + (1-t^{-1})s_1  \\
 \end{array}
\right.
$$
Again, as in b), we apply Corollary \ref{important}.

d)$\gamma=A_i = \s_{2i}\s_{2i-1}\s_{2i+1}\s_{2i} =
\s_{2i-1}\s_{2i}\s_{2i-1} \cdot
\s_{2i}^{-2}\cdot\s_{2i}\s_{2i+1}\s_{2i}$.

$$
A_i \left \{
 \begin{array}{lcl}
   d_{i-1} \to d_{i-1} + d_i- \tm s_i \\
   s_i \to s_{i+1} \\
   d_i \to \tp - d_i +\tm s_i + \tp s_{i+1} \\
   s_{i+1} \to s_i  \\
   d_{i+1} \to d_i + d_{i+1} - \tp s_{i+1}
 \end{array}
\right.
$$

If we add this transformation to the very bottom of the braid such that $s_i$ switch with
$s_{i+1}$ then this changes $U_\beta$ by multiplication by $-1$. Consider the case when
we insert $A_i$ in the top of the braid, the corresponding matrix is
multiplied by a matrix whose action on the vectors
$d_{i-1},d_i,d_{i+1}$ is given by the matrix

$$
\begin{vmatrix}
   1 & 1 & 0 \\
   0 & -1& 0 \\
   0 & 1 & 1 \\
\end{vmatrix}
$$

Its determinant equals $-1$. Therefore
$U_{\beta\gamma}=U_{\gamma\beta} = - U_{\beta}$, while $e(\beta\gamma)/e(\beta)=-1$.

e) Both elements $\s_{2i-1}A_i\s_{2i+1}, \s_{2i+1}A_i\s_{2i-1}$ have the same action modulo Corollary \ref{important}, we
write all the details for only one of them:

$$
\s_{2i-1}A_i\s_{2i+1} \left \{
 \begin{array}{lcl}
   d_{i-1} \to d_{i-1} -\tm s_i + d_i \\
   s_i \to s_{i+1} \\
   d_i \to \tp s_i + \tp s_{i+1} - d_i \\
   s_{i+1} \to s_i  \\
   d_{i+1} \to d_i + d_{i+1} - \tp s_{i+1}
 \end{array}
\right.
$$

Then the same as in d).

Finally we see that $e(\beta')/e(\beta)$  kills all the additional coefficients
appearing in the above considerations. 

\begin{figure}[h]
\begin{center}
\begin{tikzpicture}[scale=0.3,>=latex]
\draw (0,0)--++(0,2)--++(0.3,0.3);
\draw (0.7,2.7)--++(0.3,0.3)--++(-1,1)--++(0,2);
\draw (1,0)--++(0,1)--++(0.3,0.3);
\draw (1.7,1.7)--++(0.3,0.3)--++(0,2)--++(-1,1)--++(0,1);

\draw (2,0)--++(0,1)--++(-2,2)--++(0.3,0.3);
\draw (0.7,3.7)--++(0.6,0.6);
\draw (1.7,4.7)--++(0.3,0.3)--++(0,1);
\draw (3,0)--++(0,6);
\draw (1.5,-1) node {$e(\beta)=t$};
\draw [decoration={markings, mark=at position 0.5 with {\arrow{>}}},postaction={decorate}] (2,5.5)--(2,6);
\draw [decoration={markings, mark=at position 0.5 with {\arrow{>}}},postaction={decorate}] (0,5.5)--(0,6);

\begin{scope}[xshift=6cm]

\draw [decoration={markings, mark=at position 0.5 with {\arrow{>}}},postaction={decorate}] (2,5.5)--(2,6);
\draw [decoration={markings, mark=at position 0.5 with {\arrow{>}}},postaction={decorate}] (0,5.5)--(0,6);
\draw (1.5,-1) node {$e(\beta)=-1$};

\draw (0,0)--++(0,2)--++(0.3,0.3);
\draw (0.7,2.7)--++(0.3,0.3)--++(0,1)--++(0.3,0.3);
\draw (1.7,4.7)--++(0.3,0.3)--++(0,1);
\draw (1,0)--++(0,1)--++(0.3,0.3);
\draw (1.7,1.7)--++(0.3,0.3)--++(0,1)--++(0.3,0.3);
\draw (2.7,3.7)--++(0.3,0.3)--++(0,2);
\draw (2,0)--++(0,1)--++(-2,2)--++(0,3);
\draw (3,0)--++(0,3)--++(-2,2)--++(0,1);

\end{scope}
\end{tikzpicture}
\caption{Examples of $e(\beta)$.}
\end{center}
\end{figure}
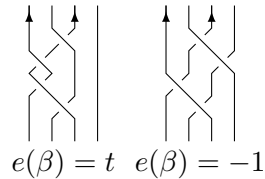

Indeed, if we switch two odd strings we multiply by
$\tp$, if we switch two even strings we multiply by $-\tm$. 
In the cases b),c) we get $\tp\cdot \tp = t$, and for
d),e) we get $\tp\cdot(-\tm) = -1$, which coincides with the
definition of $e(\beta)$.
\end{proof}

In Theorem \ref{th_main}  we need to prove that if  $L$ is the plait closure of $\beta$, then
  $U_L$ does not depend on $\beta$, and depends only on $L$ .

\begin{proof}[Proof of Theorem \ref{th_main}] Suppose $L$ is presented as the plait closures of
braids $\beta_1, \beta_2$. Using result of Birman
we add to $\beta_1, \beta_2$ appropriate number of ``trivial components''
which do not change $U$ thanks to Corollary \ref{cor_trivial}, and
we get braids $\beta_1',
\beta_2'\in B_{n,n}$ which lie in the same coset of $K_{2n}$.
Lemma \ref{lemma_color} asserts that if $\beta_1',\beta_2'$ are colored braids
with equal plait closures then $\beta_1', \beta_2'\in B_{n,n}$ lie in the same
coset by subgroup $R_{n}$, and $V_L$ depends only on such a coset by
the Lemma \ref{lemma_coset}. 
\end{proof}

\section{Skein relation and a proof of Theorem 2}\label{sec_skein}
We already proved that $U_L$ depends only on the link $L$. In order to identify $U_L$ with the Alexander-Conway invariant, we prove that $U_L$ satisfies the skein relation $U_{L_+}-U_{L_-}=(\tm-\tp)U_{L_0}$ where the links $U_{L_+},U_{L_-},U_{L_0}$ are oriented and differ at only one place as in Fig.\ref{nin9}. 

Consider a given crossing in $L_+$ which we will resolve, we may assume that the strings at that crossing are oriented upwards. Then, pushing this crossing down and to the right we may assume that $L$ is presented as the plait closure of a braid with the fragment in Fig.\ref{nin9} at the right bottom. Therefore, in order to axiomatically define a Conway-type invariant it is enough to prove
a skein relation only for such a crossing.

\begin{theo}
The relation $U_{L_+} -
U_{L_-} = \td U_{L_0}$, holds, i.e. $$e(\beta_+)U_{\beta_+} -
e(\beta_-)U_{\beta_-} = \td e(\beta_0)U_{\beta_0},$$ where
$\beta_+ = \s_{2i-1}\s_{2i}^{-1}\s_{2i-1}\beta, \beta_- = \s_{2i-1}\s_{2i}\s_{2i-1}\beta,  \beta_0 = \s_{2i-1}^2\beta$, $L_0,L_+,L_-$ are the plait closure of $\beta_0,\beta_+,\beta_1$ respectively, and $\beta$ is an arbitrary braid in $B_{n,n}$.
\end{theo}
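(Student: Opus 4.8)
The plan is to reduce the skein relation to a purely linear-algebraic identity among the three braid actions $\widetilde{\s_{2i-1}\s_{2i}^{-1}\s_{2i-1}}$, $\widetilde{\s_{2i-1}\s_{2i}\s_{2i-1}}$ and $\widetilde{\s_{2i-1}^2}$ on $W$, and then extract the consequence for the minor $B'$. Note first that all three braids $\beta_+$, $\beta_-$, $\beta_0$ differ from $\beta$ only by prepending something at the bottom (acting first), so $\beta_\pm s_j = \eta_\pm(\beta s_j)$ and $\beta_0 s_j = \eta_0(\beta s_j)$ where $\eta_+ = \widetilde{\s_{2i-1}\s_{2i}^{-1}\s_{2i-1}}$, etc. Actually the cleanest route is the other way: since the skein modification is at the \emph{right bottom}, it is applied to the marked points \emph{before} $\beta$, so it suffices to compare $U_{\beta\gamma_+}$, $U_{\beta\gamma_-}$, $U_{\beta\gamma_0}$ where $\gamma_+ = \s_{2i-1}\s_{2i}^{-1}\s_{2i-1}$, $\gamma_- = \s_{2i-1}\s_{2i}\s_{2i-1}=M_i$, $\gamma_0=\s_{2i-1}^2=Q_i$. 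Here I would use Corollary~\ref{corollary}: $\gamma_-=M_i$ and $\gamma_0=Q_i$ act (cf. Section~\ref{sec_action}) leaving no clean invariance of $W_d$ in the $M_i$ case, so instead I would compute the three actions explicitly on the basis $\{[d_{i-1}],[s_i],[d_i],[s_{i+1}]\}$ (the only basis vectors moved), get three $4\times 4$ blocks $G_+,G_-,G_0$, and observe the linear relation $G_+ - G_- = \td\, G_0$ holds entry-by-entry on the relevant block — this is the heart of the matter.

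Concretely, I would first write down $\gamma_-=M_i$ from the displayed formula in Section~\ref{sec_action}(a), write down $\gamma_0=Q_i$ from Section~\ref{sec_action}(d), and compute $\gamma_+=\s_{2i-1}\s_{2i}^{-1}\s_{2i-1}$ by the same figure-and-basic-semicircle method used there (or algebraically, since we have the generator actions and $\s_{2i}^{-1}$ is obtained by inverting $P_i=\s_{2i}^2$ suitably — more carefully, $\s_{2i-1}\s_{2i}^{-1}\s_{2i-1}$ is a half-twist of the opposite sign, so its action is obtained from $M_i$ by $\tm\mapsto\tp$ in the appropriate slots, or directly from the braid relation). The expected outcome is something like $\gamma_+ : s_i\mapsto \tp d_i,\ d_i\mapsto \tp s_i,\ d_{i-1}\mapsto d_{i-1}+d_i-\tp s_i,\ s_{i+1}\mapsto s_i+s_{i+1}-\tp d_i$, against $\gamma_- : s_i\mapsto\tm d_i$, etc., and $\gamma_0 : s_i\mapsto s_i,\ d_i\mapsto d_i+\td s_i,\ d_{i-1}\mapsto d_{i-1}-\td s_i,\ s_{i+1}\mapsto s_{i+1}$. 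Then $\gamma_+-\gamma_- = \td\,\gamma_0$ should be verified slot by slot (e.g. $\tp-\tm = \td\cdot 1$ coefficient of $d_i$ in the image of $s_i$ versus $s_i$ in the image of $s_i$; note the sign conventions in $\td = \tm-\tp$).

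Having the matrix identity $\widetilde{\beta\gamma_+} - \widetilde{\beta\gamma_-} = \td\,\widetilde{\beta\gamma_0}$ on all of $W$ (multiplying the block identity on the left by the fixed matrix of $\tilde\beta$), the submatrix $B'$ (even rows, odd columns) is linear in the right factor, so $B'_+ - B'_- = \td\, B'_0$, and since minors are \emph{not} linear I would instead argue directly on $U$: by Proposition (the one stating $U_\beta$ is the top-left $(n-1)\times(n-1)$ minor of $B'$) together with Lemmata~\ref{lemma_pse} and \ref{lemma_min}, all $(n-1)$-minors of a pseudostochastic matrix agree up to sign, and a pseudostochastic matrix is determined up to this by a single linear functional — more precisely $U_\beta$ is, up to sign, $(e_2-e_1)\wedge\cdots$ applied appropriately, a \emph{linear} function of $B'$ in the relevant sense. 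So $U_{\beta\gamma_+} - U_{\beta\gamma_-} = \td\, U_{\beta\gamma_0}$ follows. Finally I would check the $e$-factors: $e(\beta_+) = (\tp)^{\phi((\beta_+)_1)}(-\tm)^{\phi((\beta_+)_2)}(-1)^n$; since $\gamma_\pm$ add two positive (resp. the inverse adds a negative) crossings among odd strings and one among even strings, a short bookkeeping of $\phi$ on $\beta_1,\beta_2$ shows $e(\beta_+)=e(\beta_-)=\tp^{2}(-\tm)\,e(\beta\,\text{-part})\cdots$ and $e(\beta_0)=\tp^2 e(\cdots)$, so that the common overall factor matches $\td e(\beta_0)$ against $e(\beta_\pm)$; this is routine but must be done carefully to fix the placement of $\td$ versus $-\td$.

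**Main obstacle.** The real work is the explicit computation of the action of $\s_{2i-1}\s_{2i}^{-1}\s_{2i-1}$ (the positive crossing) and verifying the three-term linear identity among $\gamma_+,\gamma_-,\gamma_0$ with the correct signs; everything after that (passing from the matrix identity to $U$ via pseudostochasticity, and tracking $e(\beta)$) is bookkeeping. A secondary subtlety is making sure the reduction "the crossing can be pushed to the right bottom so it acts \emph{before} $\beta$" is compatible with the definitions of $U_\beta$ as $r_i\times[\beta s_j]$ — i.e. that appending $\gamma$ on the right of $\beta$ really corresponds to precomposing the action, which is exactly the content of Corollary~\ref{corollary}'s setup and must be invoked cleanly.
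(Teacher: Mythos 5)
Your overall strategy (compute the explicit actions of the three local braids on the span of $[d_{i-1}],[s_i],[d_i],[s_{i+1}]$, verify a three--term algebraic identity, track the $e$-factors) is the same as the paper's, but the identity you put at the heart of the argument is false. The unweighted entry-by-entry relation $G_+-G_-=\td\,G_0$ does not hold: already on $s_i$ the difference $\gamma_+(s_i)-\gamma_-(s_i)$ contains the term $(\tp-\tm)d_i$, which cannot equal $\td\,s_i$; and your sample check ``$\tp-\tm=\td\cdot 1$'' compares the coefficient of $d_i$ on one side with the coefficient of $s_i$ on the other (and has the wrong sign, since $\td=\tm-\tp$). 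This error is tied to your $e$-bookkeeping: $e(\beta_+)\neq e(\beta_-)$; the paper has $e(\beta_+)=\tm e(\beta)$ and $e(\beta_-)=\tp e(\beta)$ because the single odd--odd crossing in $\s_{2i-1}\s_{2i}^{\mp1}\s_{2i-1}$ has opposite signs in the two cases. The identity that is actually true is the \emph{weighted} one, $\tm\gamma_+-\tp\gamma_-=\td\,\gamma_0$ as linear maps on $W$, with weights exactly $e(\beta_\pm)/e(\beta)$; treating the $e$'s as a common factor to be matched at the end destroys it.

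There is a second gap: even with the correct weighted linear identity on $W$, you cannot pass to $U$ ``by linearity,'' because two columns of $B'$ (those of $s_i$ and $s_{i+1}$) change simultaneously and a minor is multilinear, not linear, in its columns; pseudostochasticity makes all first minors equal up to sign but does not turn the minor into a linear functional of the matrix. The paper instead verifies the identity at the level of $2$-vectors, $\tm\,\gamma_+(s_i)\wedge\gamma_+(s_{i+1})-\tp\,\gamma_-(s_i)\wedge\gamma_-(s_{i+1})=\td\,s_i\wedge s_{i+1}$, which by multilinearity of the determinant in the two affected slots yields the skein relation after applying $\beta$ and pairing with $r_1\wedge\dots\wedge r_{n-1}$. (Your route can be repaired: since $\gamma_\pm(s_i)+\gamma_\pm(s_{i+1})=s_i+s_{i+1}$, one gets $\gamma_\pm(s_i)\wedge\gamma_\pm(s_{i+1})=\gamma_\pm(s_i)\wedge(s_i+s_{i+1})$ and the weighted linear identity applied to the single vector $\gamma_\pm(s_i)$ then suffices --- but that observation and the weights must be supplied.) Finally, fix the order convention: $\gamma$ written on the left sits at the bottom and acts first, so the relevant quantity is $r_k\times\beta(\gamma s_j)$, not $r_k\times\gamma(\beta s_j)$; your proposal wavers between the two.
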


\begin{figure}[h]
\begin{center}
\begin{tikzpicture}[scale=0.7,>=latex]
\draw (0,0)--++(0,1)--++(0.3,0.3);
\lin{0}{0}{1}{-0.5}{0.2}{<}
\lin{2}{0}{3}{-0.5}{0.2}{<}

\draw [decoration={markings, mark=at position 0.5 with {\arrow{>}}},postaction={decorate}] (2,4.5)--(2,5);
\draw [decoration={markings, mark=at position 0.5 with {\arrow{>}}},postaction={decorate}] (0,4.5)--(0,5);

\draw (1,0)--++(0,1)--++(-1,1)--++(0,1)--++(0.3,0.3);
\draw (0.7,1.7)--++(1.3,1.3)--++(0,2);
\draw (2,0)--++(0,2)--++(-0.3,0.3);
\draw (3,0)--++(0,5);
\draw (1.3,2.7)--++(-1.3,1.3)--++(0,1);
\draw (0.7,3.7)--++(0.3,0.3)--++(0,1);
\draw[dotted, thick] (1.5,2.5) circle (1.2); 

\begin{scope}[xshift=4cm]
\draw (0,0)--++(0,1)--++(0.3,0.3);
\lin{0}{0}{1}{-0.5}{0.2}{<}
\lin{2}{0}{3}{-0.5}{0.2}{<}

\draw [decoration={markings, mark=at position 0.5 with {\arrow{>}}},postaction={decorate}] (2,4.5)--(2,5);
\draw [decoration={markings, mark=at position 0.5 with {\arrow{>}}},postaction={decorate}] (0,4.5)--(0,5);

\draw (1,0)--++(0,1)--++(-1,1)--++(0,1)--++(0.3,0.3);
\draw (0.7,1.7)--++(0.6,0.6);
\draw (1.7,2.7)--++(0.3,0.3)--++(0,2);
\draw (2,0)--++(0,2)--++(-2,2);
\draw (3,0)--++(0,5);
\draw (1.3,2.7)--++(-1.3,1.3)--++(0,1);
\draw (0.7,3.7)--++(0.3,0.3)--++(0,1);
\draw[dotted, thick] (1.5,2.5) circle (1.2); 
\end{scope}

\begin{scope}[xshift=8cm]
\draw (0,0)--++(0,1)--++(0.3,0.3);
\lin{0}{0}{1}{-0.5}{0.2}{<}
\lin{2}{0}{3}{-0.5}{0.2}{<}

\draw (1,0)--++(0,1)--++(-1,1)--++(0,1)--++(0.3,0.3);
\draw (0.7,1.7)--++(0.3,0.3)--++(0,1);
\draw (2,0)--++(0,5);
\draw  (3,0)--++(0,5);
\draw [decoration={markings, mark=at position 0.5 with {\arrow{>}}},postaction={decorate}] (2,4.5)--(2,5);
\draw [decoration={markings, mark=at position 0.5 with {\arrow{>}}},postaction={decorate}] (0,4.5)--(0,5);

\draw (1,3)--++(-1,1)--++(0,1);
\draw (0.7,3.7)--++(0.3,0.3)--++(0,1);
\draw[dotted, thick] (1.5,2.5) circle (1.2); 
\end{scope}
\end{tikzpicture}
\caption{Skein relation}\label{nin9}
\end{center}
\end{figure}
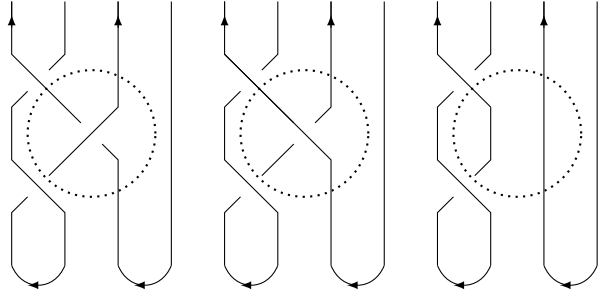

In Fig.\ref{nin9} the bottom part of a braid is depicted: a crossing and
its resolutions; these two strings are of the same color therefore the
resolution preserves their orientation. 

\begin{proof} Let us calculate the action.

$$
\s_{2i-1}\s_{2i}^{-1}\s_{2i-1} \left\{
\begin{array}{lcl}
 d_{i-1} \to d_i + d_{i-1} - \tp s_i + (t-1)(d_i + \td s_i) \\
 s_i \to \tp d_i - (t-1)s_i \\
 d_i \to \tp s_i - (t-1)(d_i + \td s_i) \\
 s_{i+1} \to s_i + s_{i+1} - \tp d_i + (t-1) s_i \\
\end{array}
\right.
$$

$$
\s_{2i-1} \s_{2i}\s_{2i-1} \left\{
 \begin{array}{lcl}
   d_{i-1} \to d_{i-1} + d_i - \tm s_i \\
   s_i \to \tm d_i \\
   d_i \to \tm s_i \\
   s_{i+1} \to s_i + s_{i+1} - \tm d_i \\
  \end{array}
\right.
$$

$$
\s_{2i-1}^2 \left\{
 \begin{array}{lcl}
   d_{i-1} \to d_{i-1} + (\tp - \tm)s_i \\
   s_i \to s_i \\
   d_i \to d_i - (\tp - \tm)s_i  \\
 \end{array}
\right.
$$

\begin{rem}
\label{simple}
Let $\beta, \eta $ be two arbitrary braids in $B_{n,n}$,
hence $$U_{\eta\beta} = \sum_{\sigma\in S_{n-1}}
\varepsilon(\sigma)\prod_{i=1}^{n-1}r_i\times \beta (\eta
s_{\sigma(i)})$$ 
\end{rem}

According to this remark, in order to compute
$U_{\beta_+}$, we should replace $\ldots
\wedge [\beta s_i]\wedge [\beta s_{i+1}]\ldots $ with $\ldots \wedge
[\beta(\tp d_i - (t-1)s_i)]\wedge [\beta(s_i + s_{i+1} - \tp d_i +
(t-1) s_i)]\wedge\ldots$ in the definition of $U_{\beta}$. We are
interested only in the right side of definition, i.e. in $[\beta s_1]\wedge \dots \wedge
[\beta s_{n-1}]$. Since $\beta$ is a linear map, we can write the difference between $\beta(\ldots \wedge [s_i]\wedge
[s_{i+1}]\wedge\ldots)$ and $$\beta(\ldots \wedge[(\tp d_i -
(t-1)s_i)]\wedge [(s_i + s_{i+1} - \tp d_i + (t-1) s_i)]\wedge\ldots)$$

Notice that  $e(\beta_+) = e(\beta)\tm, e(\beta_-) = e(\beta)\tp,
e(\beta_0) = e(\beta)$.

Therefore $e(\beta_+)U_{\beta_+} - e(\beta_-)U_{\beta_-} = \td
e(\beta_0)U_{\beta_0}$ becomes, after cancellation

$$[\tm(((1-t)s_i + \tp d_i)]\wedge [(s_{i+1} + ts_i - \tp d_i))] - \tp((\tm [d_i] \wedge [(s_i + s_{i+1} - \tm d_i)]))=$$
$$= \td [s_i]\wedge [s_{i+1}]$$

which can be verified by a direct calculation.

\end{proof}
Therefore, the skein relation identify $U_L$ with Alexander-Conway polynomial up to scaling. The scaling constant is $1$ because of Example \ref{ex1}. 
\section{Remarks}\label{sec_rem}
1. Higher Alexander polynomials (a.k.a.  elementary ideals of the Alexander matrix) can be obtained in a similar way. We look
at how $B'$ (see Section \ref{sec_aux}) changes during transformations of a braid preserving
its plait closure. One can verify that the ideals generated by the determinants
of all submatrices of given size are also invariant under braid
transformations. Direct calculations seem to be not very enlightening. But it is enough to prove that $B'$  is the presentation matrix 
of the Alexander module. See below a sketch:

a) Each homology class of the infinite cyclic
cover over complement of a knot can be pushed to the top of the braid,
therefore the circles $d_i\ (i=1,\dots,n-1)$ at the top of a braid can be
chosen as generators of the Alexander module.

b) The loops $s_i$, and, therefore  $\beta s_i$ are homologous to $0$ in the homologies of
cyclic covering. If we write $0=[\beta s_i] =\sum c_i[s_i]+\sum c_i'[d_i] $,
then we see that the row corresponding to $[s_i]$ in
$B'$  is a relation for $[d_j]$.

c) Now we prove that these are all the relations. Indeed, consider a linear combination $[d] = \sum c_j [d_j], c_j\in\mathbb Z[\tp,\tm]$, suppose
that $[d] = 0$ in the homologies of the cyclic covering. 
This implies that it is possible to find a surface $S$ with boundary $d$;
this surface {\bf does not} intersect $K$.
Stretching $S$ closer to the knot, we look at the Morse decomposition
of $S$. So, $S$ has a number of ``hats''
at the top (neighbourhoods of local maxima) and at the bottom
(neighbourhoods of local minima), each of those is the circle $s_i$ spanned by a disk. 
That means that $[d] =\sum_{i=1}^n (a_i[s_i]+ b_i\beta([s_i])$ in $H_1(D',\Theta)$ where $a_i,b_i\in\mathbb Z[\tp,\tm]$.

2. The construction of Bigelow \cite{Bigelow} is done in the language of coverings,
but it can be easily reinterpreted in terms of a local system on $D'$
where counterclockwise rotation around a marked point gives a multiplication by $t$
in the fiber. Then we lift this local system to symmetric power and
add the condition that a rotation
around the diagonal gives multiplication by $-t^{-1}$. Then Jones polynomial
is obtained via intersection of two cycles in this symmetric
power. Higher polynomials could be obtained as higher Alexander
polynomials, but the direct construction fails because of the above
monodromy around the diagonal.  


\end{document}